\newtheorem{theorem}{Theorem}[section]
\newtheorem{lemma}[theorem]{Lemma}
\newtheorem{definition}[theorem]{Definition}
\newtheorem{remark}[theorem]{Remark}
\newtheorem{proposition}[theorem]{Proposition}
\numberwithin{equation}{section}
\newenvironment{proof}[1][Proof]{\noindent\textbf{#1.} }{\hfill $\Box$}
 \makeatletter\setlength{\textwidth}{15.0cm}
\begin{document}

\author{Ya-Hui Wang and Zhi-Cheng Wang \thanks{Corresponding author (E-mail: wangzhch@lzu.edu.cn).}  \\
{School Mathematics and Statistics, Lanzhou University}\\
{Lanzhou, Gansu 730000, People's Republic of China.}}
\title{\textbf{ Propagating Terrace and Asymptotic Profile to Time-Periodic Reaction-Diffusion Equations}} \maketitle

\begin{abstract}
This paper is concerned with the asymptotic behavior of  solutions of time periodic reaction-diffusion equation
\begin{equation*}\label{aaa}
\begin{cases}
u_{t}(x,t)=u_{xx}(x,t)+f(t,u(x,t)),\quad \,\,\forall x\in\mathbb{R},\,t>0,\\
u(x,0)=u_{0}(x), \quad \quad\quad\quad\quad\quad\quad\quad\quad \forall x\in\mathbb{R},
\end{cases}
\end{equation*}
where $u_{0}(x)$ is the Heaviside type initial function and $f(t,u)$ satisfies $f(T+t,u)=f(t,u)$. Under certain conditions,  we  prove that there exists a minimal propagating terrace (a family of pulsating traveling fronts) in some specific sense and the solution of the above equation converges to the minimal propagating terrace.

\textbf{Keywords}: Reaction-diffusion equation; Mixed nonlinearity; Periodic; Pulsating traveling front; Propagating terrace; $\omega$-limit set.

\textbf{AMS Subject Classification (2010)}:  35K57; 35C07; 35B40; 35B08.
\end{abstract}

\newpage

\section{Introduction}
In this paper we investigate the long-time behavior of solutions of the following reaction-diffusion equation
\begin{equation}\label{s11}
u_{t}(x,t)=u_{xx}(x,t)+f(t,u(x,t)),\quad\,\, \forall (x,t)\in\mathbb{R}\times (0,\infty)
\end{equation}
with the initial value
\begin{equation}\label{s12}
u(x,0)=u_{0}(x), \quad\forall x\in\mathbb{R}.
\end{equation}
The nonlinearity $f\in C^2(\mathbb{R}^2,\Bbb{R}) $ satisfies
\[
f(t+T,u)=f(t,u) \,\,\,\text{and}\,\,\, f(t,0)\equiv0,\quad \forall x\in\mathbb{R},t\in\mathbb{R},
\]
where $T>0$ is a constant. Obviously, the corresponding ordinary differential equation of \eqref{s11} is
\begin{equation}\label{s13}
\begin{cases}
\omega'=f(t,\omega(t)),\quad t\in\mathbb{R},\\
\omega(0)=\beta\in\mathbb{R}.
\end{cases}
\end{equation}
Let $\omega(\beta,t)$ be a solution of \eqref{s13} relying on the initial value $\beta$. Define $P(\cdot):=\omega(\cdot,T):\Bbb{R}\to\Bbb{R}$, which is usually called the Poincar${\rm\acute{e}}$  or periodic map of \eqref{s13}. It is well-known that if $\beta_{0}$ is a fixed point of the map $P$, then $\omega(\beta_{0},t)$ is a $T$-periodic solution of \eqref{s11} and \eqref{s13}. Suppose that $\omega(\beta_{0},t)$ is a $T$-periodic solution of  \eqref{s13}. Recall  \cite{PeterHess,ZXQ2003}. We say that $\omega(\beta_{0},t)$ is isolated from below (resp.  above) with respect to \eqref{s13} if there exists no sequence of $T$-periodic solutions of \eqref{s13} converging to $\omega(\beta_{0},t)$ from below (resp. above). $\omega(\beta_{0},t)$ is said to be stable from below (resp.  above) with respect to \eqref{s13} if it is stable in the $L^{\infty}$ topology under nonpositive (resp. above) perturbations of the initial vaule around $\beta_{0}=\omega(\beta_{0},0)$.  Otherwise, $\omega(\beta_{0},t)$ is called  unstable from below (resp.  above). Suppose further that $\omega(\beta_{0},t)$ is isolated from below. Then it follows from  the Dancer-Hess connecting orbit lemma \cite{PeterHess,ZXQ2003} that  $\omega(\beta_{0},t)$ is stable from below if and only if there exists $\beta^{\ast}\in(0,\beta_{0})$ such that $\omega(\beta^{\ast},t+nT)$ converges to $\omega(\beta_{0},t)$ uniformly on $t\in [0,T]$ as $n\to\infty$. Similarly, $\omega(\beta_{0},t)$ is unstable from below if and only if there exists $\tilde{\beta}\in(0,\beta_{0})$ such that
$\omega(\tilde{\beta},t-nT)$ converges to $\omega(\beta_{0},t)$ uniformly on $ t\in [0,T]$ as $n\to\infty$.

Throughout this paper, we always suppose that $\alpha>0$ is a fixed point of the Poincar${\rm\acute{e}}$ map $P$. In this paper we focus on the Heaviside type initial value
\begin{equation}\label{s18}
u_{0}(x)=\alpha H(a-x),
\end{equation}
where  $a\in\mathbb{R}$ is a fixed constant, $H(x)$ is the Heaviside function, that is, $H(x)=0$ if $x<0$ and $H(x)=1$ if $x\geq0$. For any $a\in\mathbb{R}$, we always denote the solution of \eqref{s11} with initial value \eqref{s18} by $\tilde{u}(x,t;a)$. To describe the propagation and asymptotic behavior of solutions of \eqref{s11}, the so-called pulsating traveling front plays an important role. Here we recall the definition of pulsating traveling fronts of \eqref{s11} (see \cite{G2014CVPDE}).
\begin{definition}\label{def2convergence}
A pulsating traveling front of \eqref{s11} connecting two distinct $T$-periodic solutions $\omega_{1}(t)$ and $\omega_{2}(t)$ is an entire solution $u$ satisfying, for some $L\in\Bbb{R}$,
\[
u(x,t)=u(x-L,t+T),\,\,\forall x,t\in\mathbb{R},
\]
and
\[
u(\infty,\cdot)=\omega_{1}(\cdot)\,\,\text{and}\,\,u(-\infty,\cdot)=\omega_{2}(\cdot)\,\,\text{ locally\ uniformly\ on}\,\,\mathbb{R}.
\]
The ratio $c:=\frac{L}{T}$ is called the average speed (or simply the speed) of the traveling front.
\end{definition}
The above definition can be equivalently represented as  follows.
\begin{remark}\label{def3convergence}
 $u$ is a pulsating traveling front of \eqref{s11} connecting two distinct $T$-periodic solutions $\omega_{1}(t)$ and $\omega_{2}(t)$ with speed $c\in \Bbb{R}$ if and only if $\phi(\xi,t):=\phi(x-ct,t)=u(x,t)$ satisfies
\[
\phi_{t}-c\phi_{\xi}-\phi_{\xi\xi}-f(t,\phi)=0,\,\,\forall (\xi,t)\in\mathbb{R}^2,
\]
\[
\phi(\xi,t)=\phi(\xi,t+T),\,\forall (\xi,t)\in\mathbb{R}^2,
\]
\[
\phi(\infty,\cdot)=\omega_{2}(\cdot)\,\,\,\text{and}\,\,\,\phi(-\infty,\cdot)=\omega_{1}(\cdot)\,\,\text{uniformly on}\,\, \mathbb{R}.
\]
\end{remark}
When the nonlinearity is of bistable type, monostable type and ignition type respectively, the pulsating traveling fronts of \eqref{s11} and the long-time behavior of solutions of \eqref{s11} with front-like initial value 
have been extensively studied and here we would like to recall the existing results. Clearly, $0$ and $\alpha$ are assumed to be the fixed points of the map $P$. When the nonlinearity $f$ is of bistable type, that is, $f$ satisfies the following  structure hypotheses:

 {\it {\bf Bistable case}: $0$ is stable from above with respect to \eqref{s13}, the $T$-periodic solution $\omega(\alpha,t)$ is stable from below with respect to \eqref{s13} and there exists a unique fixed point $\theta$ of the map $P$ in $(0,\alpha)$,}

\noindent
it was proved by Alikakos et al. \cite{ABC1999TAMS} that \eqref{s11} admits a unique pulsating traveling front up to translation with a unique speed $c$ connecting two periodic solution $\omega_0(t)\equiv 0$ and $\omega_1(t)=\omega(\alpha,t)$ if
  the additional  non-generate condition
\begin{equation}\label{dd}
\left.\frac{{\rm d}}{{\rm d}\beta}P(\beta)\right|_{\beta=0,\alpha}<1<\left.\frac{{\rm d}}{{\rm d}\beta}P(\beta)\right|_{\beta=\theta}
\end{equation}
is imposed. Moreover, they showed that for any front-like initial value, the pulsating traveling front is globally exponentially asymptotically stable up to  shift. Here we would like to emphasize that the existence of the unique pulsating traveling front can also be obtained by the theory developed by Fang and Zhao \cite{FZ} when \eqref{dd} holds. In high dimensional space, pulsating curved fronts \cite{WW2011,SLW2012,W2015} were also established for \eqref{s11} with periodic bistable nonlinearity. For the time almost periodic bistable case, Shen \cite{Shen1999JDE1,Shen1999JDE2} introduced the definition of almost periodic traveling wave solutions and established  the existence, uniqueness and stability of the solutions. When $f$ is of ignition type, that is, $f$ satisfies

{\it {\bf Ignition case}: There exists $\theta\in (0,\alpha)$ such that all $\beta\in [0,\theta]$ are  fixed points  of the map $P$ and there is no fixed point of $P$ in $(\theta,\alpha)$. In addition, $\omega(\theta,t)$ is unstable from above with respect to \eqref{s13} and  $\omega(\alpha,t)$ is stable from below with respect to \eqref{s13},}

\noindent the existence, uniqueness and stability of pulsating traveling fronts of \eqref{s11} were established by Shen and Shen  \cite{SS2017JDE1,SS2017T}, where they considered  the more general equation \eqref{s11} in time heterogeneous media of ignition type. Other results of transition fronts in local and nonlocal diffusion equations with time nonautonomous nonlinearity can refer to \cite{SS2017DCDS,SS2017JDE2}. When $f$ is of monostable type, that is, $f$ satisfies

{\it {\bf Monostable case}: $0$ is unstable from above with respect to \eqref{s13}, the $T$-periodic solution $\omega(\alpha,t)$ is stable from below with respect to \eqref{s13}, and there is no fixed points  of the map $P$ in $(0,\alpha)$,}

\noindent the existence of pulsating traveling fronts can be obtained by  Liang et al. \cite{LYZ2006JDE,LZ2007}. In addition, for the front-like initial value with exponential decay near $0$, the asymptotic stability of pulsating traveling fronts with Fisher-KPP nonlinearity was  established by  Shen \cite{Shen2011}. In fact, Shen \cite{Shen2011} investigated  a class of almost periodic KPP-type reaction-diffusion equations, which covers the periodic case. Using the results of  \cite{SS2017JDE1,SS2017T} for ignition equations, Bo et al. \cite{B2019RWA}  proved the  existence of pulsating traveling fronts and the  spreading speed for a class of time periodic diffusion equations with degenerate monostable nonlinearity.

Observing the above results, we can find that, when the nonlinearity $f$ is of bistable, ignition and monostable type respectively,   equation \eqref{s11} admits pulsating traveling front connecting two periodic solutions $\omega_0(t)\equiv 0$ and $\omega(\alpha,t)$ and  the solutions of \eqref{s11} with front-like type initial value 
usually converge to the unique pulsating traveling front (the critical pulsating traveling front  for monostable nonlinearity). Such a conclusion is  true for the homogeneous equations \cite{AW1978,B1983,FM1977ARMA,U1978,VVV1994}, see also \cite{G2014CVPDE,H2008,HR2011} for spatial periodic equations. In this paper we are interested in the general nonlinearity $f$,  that is, $f$ is not one of bistable, ignition and monostable types.  Exactly, we want to know whether there exists pulsating traveling front of \eqref{s11} connecting two periodic solutions $\omega_0(t)\equiv 0$ and $\omega(\alpha,t)$ for a general nonlinearity $f$ and what are the asymptotic profile of solutions of \eqref{s11} with Heaviside type initial value.  In fact, Fife and McLeod \cite{FM1977ARMA} have considered three-stable homogeneous reaction-diffusion equations, namely, $f(t,u):=f(u)$ admits three stable equilibria $0<\theta<\alpha$. They  showed that in this case the  traveling front  connecting $0$ and $\alpha$  may not exist.  Hence,  to describe the long-time behavior of solutions with front-like initial value, a combination of two bistable traveling fronts is needed. Recently, Ducrot et al.\cite{DGM} made a great progress for such a problem and they showed that for the spatially periodic reaction-diffusion equations with suitable conditions, there exists a minimal propagating terrace (a family of pulsating traveling fronts) and the solution of the equation with the Heaviside type initial value converges to the minimal propagating terrace. More recently,  Polacik \cite{PSIAM2017} investigated the long-time behavior of the following equation
\begin{equation}\label{s15}
\begin{cases}
 u_{t}(x,t)=\triangle u(x,t)+f(u(x,t)), \quad x\in\mathbb{R}^{N},\,\,t>0,\\
 u(x,0)=u_{0}(x),\,\,\,\,\,\quad\quad \quad\quad\quad\quad\quad x\in\mathbb{R}^{N},
 \end{cases}
\end{equation}
where the nonlinearity  $f\in C^{1}(\mathbb{R})$ satisfies $f(\varrho)=f(0)=0$ with $\varrho>0$ and $N\geq2$, the initial value $u_{0}(x)$ satisfies
\[
0\leq u_{0}(x)\leq\alpha,\,\,\lim_{x_{1}\to-\infty}u_{0}(x_{1},x')=\varrho\,\, {\rm and}\,\,\lim_{x_{1}\to\infty}u_{0}(x_{1},x')=0 \quad {\rm uniform\ in}\  x'\in\mathbb{R}^{N-1}.
\]
He showed that the solution of \eqref{s15} approaches a planar propagating terrace or a stacked family of planar traveling fronts as $t\to\infty$. Polacik  \cite{PMAMS} further considered  the case of $N=1$ and the initial value $u_{0}(x)$ with
\[
\lim_{x\to-\infty}u_{0}(x)>0\,\,\,\text{and}\,\,\lim_{x\to\infty}u_{0}(x)=0.
\]
By employing the phase analysis, zero number argument and a geometric method involving the spatial trajectories $\{(u(x,t),(u_{x}(x,t))\,|\,x\in\mathbb{R},t>0\}$, he revealed that the graph of $u(x,t)$ is arbitrarily close to a propagating terrace, that is, a system of stacked traveling fronts at large times.  Du and Matano \cite{DM2018Pre} studied the propagation profile of solutions of \eqref{s15} for a high-dimensional case, where the  nonlinearity $f(u)$ is of multistable type:  $f(\alpha)=f(0)=0$, $f'(0)<0$ and $f'(\varrho)<0$  for some $\varrho>0$, $f$ may have finitely many nondegenerate zeros in the interval $(0, \varrho)$, the class of initial data $u_0(x)$ includes in particular those which are nonnegative and
decay to $0$ at infinity. They showed that if $u(\cdot,t)$ satisfies
\[
\left\|u(\cdot,t)-\alpha\right\|_{C_{loc}(\mathbb{R}^N)}\to0\,\,\,\text{as}\,\,\,t\to\infty,
\]
then the asymptotic behavior of $u(x,t)$ is determined by the one-dimensional propagating terrace introduced by Ducrot et al.\cite{DGM}.

Motivated by Ducrot et al.\cite{DGM}, in this paper we study the propagation and asymptotic behavior of solutions of \eqref{s11} with Heaviside type initial value under  the following hypothesis
\begin{enumerate}
\item[($\mathbf{H1}$)] There exists a solution $\underline{u}(x,t)$ of \eqref{s11} with compactly supported initial value $0\leq \underline{u}_{0}(x)<\alpha$, which converges locally uniformly to $\omega(\alpha,t)$ as $t\to\infty$,
\end{enumerate}
where $\alpha>0$ is the fixed point of the map $P$ as assumed before. As those done by Ducrot et al.\cite{DGM}, we will also show that there exists a minimal propagating terrace of \eqref{s11} and the solution of  equation of \eqref{s11} with the Heaviside type initial value converges to the minimal propagating terrace. Here we emphasize that the hypothesis ($\mathbf{H1}$)  is proper and covers an extensive variety of nonlinearity. In fact, the asymptotic behavior of solutions of \eqref{s11} with general nonlinearity and compactly supported initial value has been widely studied recently.
See Zlatos \cite{Z2006JAMS},  Du and Matano  \cite{DM2010JEMS} and Du and Polacik \cite{DP} for homogenous reaction-diffusion equation,   and Polacik \cite{P2011ARMA} and  Ding and Matano \cite{DingM2018} for the nonautonomous reaction-diffusion equation.

To state our main results, we first introduce the definition of a propagating terrace of \eqref{s11}. For the sake of simplicity, we assume that $\omega_{k}(t):=\omega(\alpha^{k},t)$ for all $0\leq k\leq N, k\in \mathbb{N}$, where $\alpha^{k}$ ($0\leq k\leq N$) is the fixed point  of the map $P$, that is, $\omega_{k}(t)$ is $T$-periodic solution  of \eqref{s11}.
\begin{definition}\label{defpropagatingterrace}
For $Q=\left\{\omega_{k}(t)_{0\leq k\leq N},U_{k}(x,t)_{1\leq k\leq N}\right\}$, if the following statements hold:
\begin{enumerate}
\item[(1)] $\omega_{k}(t)\,(0\leq k\leq N)$ is a $T$-periodic solution of \eqref{s11} with
\[\omega(\alpha,t)=\omega_{0}(t)>\omega_{1}(t)>\cdots>\omega_{N}(t)=0;\]
\item[(2)] $U_{k}(x,t)\,(1\leq k\leq N)$ is a pulsating traveling front of \eqref{s11} connecting $\omega_{k}(t)$ to $\omega_{k-1}(t)$;
\item[(3)]$c_{k}$ is the speed of \,$U_{k}$ satisfying $0\leq c_{1}\leq \cdots \leq c_{N}$,
\end{enumerate}
 then we call $Q$ a propagating terrace of \eqref{s11} connecting $0$ to $\omega(\alpha,t)$.
\end{definition}

It is clear that a propagating terrace is just a single pulsating traveling front in the case of monostable, bistable and ignition nonlinearities (see Theorem \ref{th3terrace}). We introduce a so-called ``minimal" notion for a propagating terrace.
\begin{definition}\label{def1terrace}
Let $Q=\{\omega_{k}(t)_{0\leq k\leq N},U_{k}(x,t)_{1\leq k\leq N}\}$ be a propagating terrace of \eqref{s11} connecting $0$ and $\omega(\alpha,t)$. We say that $Q$ is minimal if and only if the following statements are valid:

(1) if $P=\{\psi_{k}(t)_{0\leq k\leq N'},
V_{k}(x,t)_{1\leq k\leq N'}\}$ is an any other propagating terrace of \eqref{s11}, then
\begin{equation*}
\{\omega_{k}(t)\,|\,0\leq k\leq N\}\subset \{\psi_{k}(t)\,|\,0\leq k\leq N'\};
\end{equation*}

(2) $U_{k}$ is steeper than any other entire solution connecting $\omega_{k}(t)$ to $\omega_{k-1}(t)$.
\end{definition}

The concept of ``steeper"  will be given in next section. 
 The following assumption is also needed in the proof of the main results.
\begin{enumerate}
\item[($\mathbf{H2}$)] For  the periodic solution $\bar{\omega} (t) $  of \eqref{s13} with $\bar{\omega} (0)\in (0,\alpha)$, if it is isolated and unstable from below, then there exists a sequence $\{\bar{\phi}_{n}(t)\}$ of $T$-periodic supersolutions  of \eqref{s13} such that  $ \bar{\phi}_{n}(t)<\bar{\omega} (t)$ for any $t\in\Bbb{R}$ and $n\in\Bbb{N}$, and  $\bar{\phi}_{n}(t)$  converges to $\omega_{k}(t)$ uniformly in $t\in \mathbb{R}$ as $n\to\infty$.
\end{enumerate}
Here we would like to emphasize that  the hypothesis (H2) is not  harsh. In order to explain this fact,  we  introduce the following periodic eigenvalues problem:
\begin{equation}\label{eigenvalue1}
\begin{cases}
\bar{\varphi}^\prime(t)-f_{u}(t,\bar{\omega} (t))\bar{\varphi}(t)= \bar{\mu} \bar{\varphi}(t),\quad \quad \,\,\forall t\in\mathbb{R},\\
\bar{\varphi}(t)>0\,\,\text{and}\,\,\bar{\varphi}(t)=\bar{\varphi}(t+T),\quad\quad\quad\,\forall t\in\mathbb{R}.
\end{cases}
\end{equation}
A direct calculation shows that $\bar{\mu} :=-\frac{1}{T}\int_{0}^{T}f_{u}(t,\bar{\omega} (t))dt$  is an eigenvalue of problem \eqref{eigenvalue1} with the corresponding eigenfunction
\begin{equation}\label{s43}
\bar{\varphi} (t)=\exp\left(\int_{0}^{t}f_{u}(s,\bar{\omega}(s))ds-\frac{t}{T}\int_{0}^{T}f_{u}(t,\bar{\omega} (t))dt \right).
\end{equation}
Since $\bar{\omega} (t)$ is  unstable from below, then we have $\bar{\mu}\leq 0$. In this case we can show that if either $\bar{\mu}<0$ or $\bar{\mu}=0$ and there exists $\sigma>1$, $\tau>0$ and $\bar{\delta}>0$ such that
 \[
 f(t,\bar{\omega}(t)-\delta)-f(t-\bar{\omega}(t))\leq -f_u(t,\bar{\omega}(t))\delta-\tau\delta^\sigma\quad \delta\in (0,\bar{\delta}),
 \]
then the hypothesis (H2) holds true. Set $\bar{\phi}_{n}(t):=\bar{\omega}(t)-\frac{\varepsilon_0}{n}\bar{\varphi}(t)$, where  $ \varepsilon_0$ is a positive constant.  Since $f\in C^2(\Bbb{R}^2,\Bbb{R})$, we have
\begin{eqnarray*}
&&\bar{\phi}_{n}^\prime(t)-f(t,\bar{\phi}_{n}(t))\\
&=&\bar{\omega}^\prime(t)-\frac{\varepsilon_0}{n}\bar{\varphi}^\prime(t)- f\left(t,\bar{\omega}(t)-\frac{\varepsilon_0}{n}\bar{\varphi}(t)\right)\\
&=&f(t,\bar{\omega}(t))-f\left(t,\bar{\omega}(t)-\frac{\varepsilon_0}{n}\bar{\varphi}(t)\right)
-\frac{\varepsilon_0}{n}f_{u}(t,\bar{\omega} (t))\bar{\varphi}(t)-\frac{\varepsilon_0}{n}\bar{\mu} \bar{\varphi}(t)\\
&>0&
\end{eqnarray*}
for any $t\in\Bbb{R}$ and $n\in\Bbb{N}$, if we take $\varepsilon_0>0$  small enough. This implies that for any $n\in\Bbb{N}$, $\bar{\phi}_{n}(t)$ is a supersolution of \eqref{s13}. Clearly, $\bar{\phi}_{n}(t)$ satisfies all other conditions of the assumption (H2). Thus, (H2) holds.

Let us now state the first  result of this paper.
\begin{theorem}\label{th1terrace} Assume that {\rm(H1)} and {\rm(H2)} hold.
There exists a  propagating terrace
\[
Q=\{\omega_{k}(t)_{0\leq k\leq N},U_{k}(x,t)_{1\leq k\leq N}\}
 \]
 of \eqref{s11} connecting $0$ to $\omega(\alpha,t)$, which is minimal in the sense of Definition {\rm\ref{def1terrace}}. Such a minimal propagating terrace is unique in the sense that any minimal propagating terrace has the same $\omega_{k}(t)_{0\leq k\leq N}$ and that $U_{k}(x,t)$ is  unique  up to spatially translations for each $k$. Moreover, it satisfies

(1) $\omega_{k}(t)$ $(0\leq k<N)$ is isolated and stable from below with respect to \eqref{s13};

(2) $\omega_{k}(t)$ and $U_{k}(x,t)$ are steeper than any other entire solution of \eqref{s11}.
\end{theorem}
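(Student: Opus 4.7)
The plan is to adapt the approach of Ducrot, Giletti and Matano \cite{DGM} from spatial to temporal periodicity, using the Poincar\'e map framework already set up in this paper. Fix the solution $\tilde{u}(x,t;a)$ with Heaviside initial value~\eqref{s18}. By hypothesis (H1), a suitable spatial translate of $\underline{u}$ lies below $\tilde{u}(\cdot,0;a)$, so the comparison principle forces $\tilde{u}(x,t;a)$ to converge to $\omega(\alpha,t)$ locally uniformly in any moving frame of sufficiently negative speed, while $0\le\tilde{u}(x,t;a)\le\omega(\alpha,t)$ follows from the maximum principle. For every $\lambda\in(0,\alpha)$ that is not a fixed point of the Poincar\'e map $P$, introduce the interface
\[
X_\lambda(t)=\sup\{x\in\mathbb{R}\,:\,\tilde{u}(x,t;a)=\lambda\},
\]
and show, as in \cite{DGM}, that $X_\lambda(nT)/(nT)$ admits a limit $c(\lambda)\in\mathbb{R}$ as $n\to\infty$, and that $\lambda\mapsto c(\lambda)$ is non-increasing and piecewise constant. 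The values of $\lambda$ at which the jumps occur are precisely the heights $\omega_k(0)$ that we seek, and the monotonicity of $c(\lambda)$ automatically delivers the ordering $0\le c_1\le\cdots\le c_N$ required by Definition~\ref{defpropagatingterrace}.

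Once the plateaus are located, the fronts $U_k$ are produced by extraction. For each jump level $\lambda_k$ pick sequences $x_n\in\mathbb{R}$ and $k_n\in\mathbb{N}$ with $k_n\to\infty$ and $\tilde{u}(x_n,k_nT;a)\to\lambda_k$, and consider the translates $u_n(x,t)=\tilde{u}(x+x_n,t+k_nT;a)$. By parabolic interior estimates and the identity $f(t+T,u)=f(t,u)$, a subsequence of $u_n$ converges locally uniformly to an entire bounded solution $U$ of~\eqref{s11}; sliding the base time across $[0,T]$ and aligning the spatial shifts with $c_kT\mathbb{Z}$ upgrades $U$ to a pulsating traveling front of speed $c_k=\lim X_{\lambda_k}(nT)/(nT)$ in the sense of Definition~\ref{def2convergence}. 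Its asymptotic $T$-periodic limits at $\pm\infty$ are fixed points of $P$, and the jump structure of $c(\lambda)$ identifies them as the two consecutive plateaus $\omega_{k-1}(t)$ and $\omega_k(t)$ flanking $\lambda_k$. Hypothesis (H2) is crucial here: if an intermediate $\omega_k(t)$ were unstable from below, the supersolutions $\bar{\phi}_n$ from (H2) would allow the solution to slip below $\omega_k(t)$ down to a smaller periodic solution, contradicting the fact that $\omega_k(0)$ is a jump value of $c(\lambda)$. This forces each intermediate $\omega_k(t)$ ($0\le k<N$) to be isolated and stable from below, proving~(1), and an analogous argument rules out unstable-from-above behaviour at the upper end.

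Minimality, uniqueness up to translation, and the steepness statement~(2) all flow from the zero-number (intersection-number) machinery for the time-periodic parabolic operator $\partial_t-\partial_{xx}-f(t,\cdot)\,$: for any two distinct solutions $u$ and $v$, the number of zeros in $x$ of $u(\cdot,t)-v(\cdot,t)$ is non-increasing in $t$ and drops strictly whenever a multiple zero appears. Applied to $U_k$ and any other entire solution $w$ connecting $\omega_k(t)$ to $\omega_{k-1}(t)$, the prescribed asymptotic limits at $\pm\infty$ force this intersection count to be at most one for large $t$, from which steepness of $U_k$ follows; a standard sliding argument then yields uniqueness of $U_k$ up to spatial translation. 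For minimality, given any other propagating terrace $\{\psi_k(t),V_k(x,t)\}$, the same intersection bound combined with the monotonicity and jump structure of $c(\lambda)$ forces $\{\omega_k\}\subset\{\psi_k\}$.

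The main obstacle is the second step: establishing that the $\omega$-limit of the translates $u_n$ is a genuine pulsating traveling front of speed $c_k$ rather than a more general entire solution. Two technical issues arise. First, the limiting profile must be $T$-periodic in $t$ up to the spatial shift $L_k=c_kT$; this requires extracting a time subsequence in $T\mathbb{Z}$ and spatial shifts compatible with $c_kT\mathbb{Z}$, relying on the rigid periodicity $f(t+T,u)=f(t,u)$. Second, the limit must be prevented from collapsing to a periodic solution of~\eqref{s13} strictly between the expected plateaus or from splitting into further intermediate plateaus, which is where the Dancer-Hess dichotomy recalled in the introduction together with (H2) plays a decisive role. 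Once these points are handled, the zero-number toolbox furnishes the remaining rigidity statements.
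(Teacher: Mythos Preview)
Your proposal has the right ingredients but contains a genuine gap in the steepness argument, and its organization creates a difficulty the paper sidesteps. The serious issue is your justification of steepness: you claim that for $U_k$ and any entire solution $w$ connecting $\omega_k$ to $\omega_{k-1}$, ``the prescribed asymptotic limits at $\pm\infty$ force this intersection count to be at most one for large $t$.'' This is false as stated --- two solutions sharing limits at $\pm\infty$ can intersect arbitrarily many times, and the zero-number lemma only says the count is nonincreasing, not that it eventually drops to one. The paper's mechanism is different and uses the Heaviside data directly: for any entire solution $w$ with $0\le w\le\omega(\alpha,t)$ and any time shift, $\tilde{u}(\cdot,0;a)-w(\cdot,t_0)$ already has sign pattern $\lhd[+\,-]$ at $t=0$, and this bound of one sign change is carried forward and survives the limit. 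This yields Lemma~\ref{lem-limitset}: every $\omega$-limit orbit of $\tilde{u}$ is steeper than \emph{every} entire solution between $0$ and $\omega(\alpha,t)$, not merely those with matching asymptotics. That single lemma delivers full-sequence convergence of the normalized translates (any two subsequential limits are mutually steeper and agree at one point, hence coincide), the steepness of each $U_k$ and $\omega_k$, and the minimality argument.

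Your global plan --- first prove $c(\lambda)=\lim_n X_\lambda(nT)/(nT)$ exists and is piecewise constant, then read off the $\omega_k$ from its jumps --- also runs into trouble, because the existence of that limit is not obvious a priori; in the paper it is a \emph{consequence} of the steepness lemma above (via Lemmas~\ref{le1convergence}--\ref{le2convergence}), not a starting point. The paper instead builds the terrace iteratively from the top: choose $\lambda_1$ just below $\alpha$, obtain $U_1=u_\infty(\cdot,\cdot;\lambda_1)$ connecting $\omega_0$ to some $\omega_1$, show $\omega_1$ is isolated and stable from below, then repeat with $\lambda_2$ just below $\omega_1(0)$, and so on (Lemma~\ref{le1terrace}). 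The stability-from-below step is not the abstract slipping argument you sketch; it uses the explicit moving supersolution $v(x,t)=\min\{\omega_k(t),\,\varphi_k(t)e^{-\lambda(x-ct)}+\phi_{k,j}(t)\}$ with $0<c<c_*$ together with the comparison Lemma~\ref{le3terrace} to reach a contradiction. The ordering $c_1\le\cdots\le c_N$ then follows simply from the monotonicity of $\lambda\mapsto\ell(j,\lambda)$.
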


The following second result is devoted to the convergence of solution.
\begin{theorem}\label{th2terrace} Assume that {\rm(H1)} and {\rm (H2)} hold.
For any $a\in\Bbb{R}$, the solution $\tilde{u}(x,t;a)$ of \eqref{s11} converges to the minimal propagating terrace $Q=\{\omega_{k}(t)_{0\leq k\leq N},\,U_{k}(x,t)_{1\leq k\leq N}\}$ in the following sense:

(1) there exist functions $\{g_{k}(t)\}_{1\leq k\leq N}$ with $g_{k}(t)=o(t)\,(t\to\infty)$ such that
\[\tilde{u}(x+c_{k}(t-g_{k}(t)),t;a)-U_{k}(x+c_{k}t-a,t)\to0\,\,\,\text{as}\,\,\,t\to\infty\]
locally uniformly on $x\in\mathbb{R}$, $c_{k}$ is the speed of \,$U_{k}$;

(2) for any $\varepsilon>0$, there exist $C>0,K\in\mathbb{N}$ such that for any $t\geq KT,x\in I_{k,c}(t)$,
\[\|\tilde{u}(\cdot,t;a)-\omega_{k}(t)\|_{L^{\infty}({I_{k,c}(t)})}\leq\varepsilon,\]
where
\[I_{k,c}(t)=[c_{k}(t-g_{k}(t))+C,\,c_{k+1}(t-g_{k+1}(t))-C],\quad1\leq k\leq N-1,\]
\[I_{0,c}(t)=(-\infty,\,c_{1}(t-g_{1}(t))+C],\quad I_{N,c}(t)=[c_{N}(t-g_{N}(t))+C,\,\infty).\]
\end{theorem}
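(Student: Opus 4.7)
The plan is to follow the Ducrot--Giletti--Matano strategy, transposing it from spatially periodic to time-periodic media. The backbone of everything is monotonicity in $x$: since $u_0(\cdot) = \alpha H(a-\cdot)$ is non-increasing and the equation is translation invariant, the strong maximum principle applied to $\tilde u(x,t;a) - \tilde u(y,t;a)$ for $y<x$ gives that $\tilde u(\cdot,t;a)$ is strictly decreasing in $x$ on the set where it is neither $0$ nor $\omega(\alpha,t)$, for every $t>0$. Together with the uniform bounds $0\leq \tilde u\leq \omega(\alpha,t)$ (comparison with $0$ and with $\omega(\alpha,t)$), this sets up a compact family of profiles.

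First I would analyze $\omega$-limits of $\tilde u$ in moving frames. For any $c\geq 0$ and any $t_n\in T\mathbb{N}$ with $t_n\to\infty$, parabolic regularity extracts a limit $v_\infty$ of $v_n(x,t) := \tilde u(x+c t_n, t+t_n; a)$ that is an entire solution of \eqref{s11}, $T$-periodic in $t$, non-increasing in $x$. The key tool to identify $v_\infty$ is Angenent's zero-number lemma applied to $x\mapsto \tilde u(x,t;a)-\omega(\beta,t)$ for each fixed point $\beta$ of $P$: since the Heaviside datum gives at most one sign change, the zero count is non-increasing in $t$ and drops at degenerate zeros. Combined with the $x$-monotonicity, this forces $v_\infty$ to be either a $T$-periodic solution of \eqref{s13} (constant in $x$) or a pulsating traveling front connecting two such solutions. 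The minimality and steepness clauses of Theorem \ref{th1terrace} then pin down: any $T$-periodic limit belongs to $\{\omega_k(t)\}_{0\leq k\leq N}$, and any non-constant limit coincides, up to a spatial shift, with some $U_k$.

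To prove (1), for each $k\in\{1,\dots,N\}$ pick a level $\lambda_k\in(\omega_k(0),\omega_{k-1}(0))$ disjoint from the other fixed points of $P$, and define $\xi_k(t)$ by $\tilde u(\xi_k(t),t;a)=\lambda_k$, well-defined by strict $x$-monotonicity. Apply the $\omega$-limit analysis with shift $x+\xi_k(t_n)$: any limit is a pulsating traveling front through $(0,\lambda_k)$ connecting $\omega_k$ to $\omega_{k-1}$, so by the uniqueness-up-to-translation part of Theorem \ref{th1terrace}, the shift is pinned and the limit is exactly $U_k(\cdot+c_k\cdot-a,\cdot)$. Consistency with the pure-speed analysis gives $\xi_k(t)=c_k t+o(t)$; setting $g_k(t):=t-\xi_k(t)/c_k$ when $c_k>0$ (and an analogous purely sublinear shift when $c_k=0$) delivers (1). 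Statement (2) is then a contradiction-compactness argument: if some $(x_n,t_n)$ with $x_n\in I_{k,c}(t_n)$ violated the bound, extracting a limit of $\tilde u(\cdot+x_n,\cdot+t_n;a)$ would yield an entire solution at some speed $c=\lim x_n/t_n\in[c_k,c_{k+1}]$ whose value at the origin is bounded away from $\omega_k(0)$. The buffer $C$ rules out $c=c_k$ and $c=c_{k+1}$ (those speeds only produce the fronts $U_k, U_{k+1}$ within a bounded distance of the level sets), so the limit must be the constant profile $\omega_k(t)$, a contradiction.

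The main obstacle I expect is the construction of the $g_k(t)$ in the presence of possible equalities $c_k=c_{k+1}$ in the terrace: when two consecutive fronts travel at the same speed, the level-set positions $\xi_k(t)$ and $\xi_{k+1}(t)$ both have leading order $c_k t$ and must be disentangled using the strict ordering $\omega_k(t)>\omega_{k+1}(t)$ and a finer zero-number drop across the intermediate periodic orbit. A second delicate point is precisely where hypothesis (H2) enters: the $T$-periodic supersolutions $\bar\phi_n$ it furnishes allow one to slide from any intermediate periodic orbit that is unstable from below down to strictly smaller supersolutions, which is what enforces the zero-number drop ruling out accumulation of $\tilde u$ at such unstable states and thereby makes the identification of $v_\infty$ with an element of the \emph{minimal} terrace rigorous.
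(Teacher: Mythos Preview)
Your strategy is essentially the paper's: the level-set positions $\xi_k(t)$ you introduce are precisely the paper's $\ell(j,\lambda_k)$ from Definition~\ref{def1convergence} (read at integer multiples of $T$), and the convergence of $\tilde u$ in that moving frame to $U_k$ is exactly Lemma~\ref{le1convergence} combined with the construction of the terrace in Theorem~\ref{th1terrace}. The paper then sets $g_k(t)=j(t)T-\ell(j(t),\lambda_k)/c_k$ with $j(t)T\le t<(j(t)+1)T$, and the sublinearity $g_k(t)=o(t)$ comes from Remark~\ref{re2convergence} ($\ell_j\to c_kT$). So part~(1) is as you say.

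For part~(2) the paper takes a shorter route than your contradiction--compactness argument. Since $\tilde u(\cdot,t;a)$ is strictly decreasing in $x$, it is enough to check the $\varepsilon$-bound at the two \emph{endpoints} of each interval $I_{k,c}(t)$. Part~(1) already says that near $x=c_k(t-g_k(t))\pm C$ the solution is $\varepsilon/2$-close to $U_k(\pm C+c_kt-a,t)$; choosing $C$ so that $U_k(+\infty,\cdot)=\omega_k(\cdot)$ and $U_{k+1}(-\infty,\cdot)=\omega_k(\cdot)$ are attained to within $\varepsilon/2$ at distance $C$, monotonicity sandwiches everything in between. No extraction of a limiting speed $c=\lim x_n/t_n$ is needed, and the awkward endpoint cases $c=c_k,\,c_{k+1}$ never arise.

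Two of your anticipated obstacles dissolve on inspection. First, $c_k=0$ never occurs: Proposition~\ref{pro} gives $c_k\ge c_*>0$ for every $k$. Second, hypothesis~(H2) plays no direct role in this theorem---it is consumed entirely in Step~2 of Lemma~\ref{le1terrace} (ruling out that $\omega_k$ be unstable from below) during the construction of the terrace; once Theorem~\ref{th1terrace} is in hand, the proof of Theorem~\ref{th2terrace} uses only the Section~3 lemmas and monotonicity. The case $c_k=c_{k+1}$ is likewise not an obstruction: part~(1) is proved for each $k$ separately with no reference to neighbouring speeds, and the interval $I_{k,c}(t)$ is nonempty for large $t$ because $g_k(t)-g_{k+1}(t)\to+\infty$ (this is the content of the Remark immediately following the statement of Theorem~\ref{th2terrace}).
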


\begin{remark}{\rm
Assume that $Q=\{\omega_{k}(t)_{0\leq k\leq N},U_{k}(x,t)_{1\leq k\leq N}\}$ is the minimal propagating terrace established in Theorems \ref{th1terrace} and \ref{th2terrace}. By Theorem  \ref{th2terrace} (1), it is easy to find that if $c_k=c_{k+1}$, then there must be $g_k(t)-g_{k+1}(t)\to +\infty$ as $t\to\infty$.
}
\end{remark}

To consider the special case covering monostable, bistable and ignition nonlinearities, we list the following hypothesis
\begin{enumerate}
\item[($\mathbf{F}$)] There exists no fixed point $ \gamma\in (0,\alpha)$ of the map $P$ such that the periodic solution $\omega(\gamma,t)$ of \eqref{s11}  is isolated from below and stable from below.
\end{enumerate}
\begin{theorem}\label{th3terrace} Assume that {\rm(H1)}, {\rm(H2)} and {\rm(F)} hold.
 There exists a pulsating traveling front $\tilde{U}(x,t)$  connecting $0$  and $\omega(\alpha,t)$  with speed $\tilde{c}>0$, which is steeper than any other entire solution between $0$ and $\omega(\alpha,t)$. Furthermore, for any $a\in\mathbb{R}$, there exists a function $\tilde{g}(t)$ with $\tilde{g}(t)=o(t)\,(t\to\infty)$ such that
 \[
\lim_{t\to\infty}\left\|\tilde{u}(\cdot+\tilde{c}(t-\tilde{g}(t)),t;a)-\tilde{U}_{k}(\cdot+\tilde{c}t-a,t)\right\|_{L^\infty(\Bbb{R})}=0.
\]
\end{theorem}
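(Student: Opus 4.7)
The strategy is to reduce Theorem \ref{th3terrace} to Theorems \ref{th1terrace} and \ref{th2terrace} by using hypothesis (F) to force the minimal propagating terrace to collapse to a single pulsating front. Under (H1) and (H2), Theorem \ref{th1terrace} yields a minimal propagating terrace $Q=\{\omega_k(t)_{0\leq k\leq N},U_k(x,t)_{1\leq k\leq N}\}$ connecting $0$ to $\omega(\alpha,t)$. If $N\geq 2$, then $\omega_1(t)$ would be a $T$-periodic solution with $\omega_1(0)\in(0,\alpha)$ (since $0=\omega_N(t)<\omega_1(t)<\omega_0(t)=\omega(\alpha,t)$), which by Theorem \ref{th1terrace}(1) is isolated and stable from below; this directly contradicts (F). Hence $N=1$, and one can set $\tilde U:=U_1$ and $\tilde c:=c_1$, with the steepness of $\tilde U$ among all entire solutions between $0$ and $\omega(\alpha,t)$ being a special case of Theorem \ref{th1terrace}(2).

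Next I would prove $\tilde c>0$ by comparison with (H1). Choosing a translation $\tau$ so that the support of $\underline u_0(\cdot-\tau)$ lies in $(-\infty,a)$, the parabolic comparison principle yields
\[
\underline u(x-\tau,t)\leq \tilde u(x,t;a)\leq \omega(\alpha,t),\qquad (x,t)\in\mathbb R\times[0,\infty).
\]
Combined with (H1), this forces $\tilde u(x_0,t;a)-\omega(\alpha,t)\to 0$ as $t\to\infty$ for every fixed $x_0\in\mathbb R$. If $\tilde c=0$, Theorem \ref{th2terrace}(1) would give $\tilde u(x_0,t;a)-\tilde U(x_0-a,t)\to 0$ simultaneously, so $\omega(\alpha,t)$ and $\tilde U(x_0-a,t)$ must agree in the long-time limit. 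Taking $x_0$ large enough that $\tilde U(x_0-a,t)$ is uniformly close to $0$ (possible since $\tilde U(\xi,\cdot)\to 0$ uniformly as $\xi\to\infty$) gives a contradiction, so $\tilde c>0$.

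For the $L^\infty$ convergence I would set $\tilde g(t):=g_1(t)$ and combine the two parts of Theorem \ref{th2terrace}. On a compact window $[-R,R]$ in the shifted variable, part (1) delivers locally uniform convergence of $\tilde u(\cdot+\tilde c(t-\tilde g(t)),t;a)-\tilde U(\cdot+\tilde ct-a,t)$ to zero. Outside this window the two sides are handled by matching tail estimates: the shifted front $\tilde U(\cdot+\tilde ct-a,t)$ coincides with the $T$-periodic profile $\phi(\cdot-a,t):=\tilde U(\cdot-a+\tilde ct,t)$, whose limits $\phi(\xi,\cdot)\to\omega(\alpha,\cdot)$ as $\xi\to-\infty$ and $\phi(\xi,\cdot)\to 0$ as $\xi\to\infty$ are uniform in $t$; the shifted $\tilde u$ is uniformly close to the same periodic solutions on the plateau intervals $I_{0,c}(t)$ and $I_{1,c}(t)$ by Theorem \ref{th2terrace}(2). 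Choosing $R$ so that both quantities lie within $\varepsilon$ of $\omega(\alpha,t)$ for $x<-R$ and within $\varepsilon$ of $0$ for $x>R$ yields the $L^\infty(\mathbb R)$ estimate. The main technical point is reconciling the two shifts $\tilde c(t-\tilde g(t))$ and $\tilde ct-a$, which differ by the sublinear quantity $-\tilde c\tilde g(t)+a$; because the tail bounds are uniform in $t$ and the window width $R$ is independent of this mismatch, the drift is absorbed harmlessly.
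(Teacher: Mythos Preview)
Your approach is correct and is precisely the intended one: the paper does not give a separate proof of Theorem~\ref{th3terrace}, treating it as an immediate consequence of Theorems~\ref{th1terrace} and~\ref{th2terrace} once hypothesis (F) forces $N=1$. Your reduction via (F), the steepness from Theorem~\ref{th1terrace}(2), and the upgrade from local to $L^\infty$ convergence by combining parts (1) and (2) of Theorem~\ref{th2terrace} all follow the paper's structure.

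Two minor remarks. First, your separate argument for $\tilde c>0$ is unnecessary: the proof of Theorem~\ref{th1terrace} already shows (via Proposition~\ref{pro} and Remark~\ref{re2convergence}) that each speed satisfies $c_k\geq c_\ast>0$, so $\tilde c=c_1>0$ comes for free. Second, your closing comment about ``reconciling the two shifts'' is slightly off: the shifts $\tilde c(t-\tilde g(t))$ and $\tilde ct-a$ act on different objects and are not meant to be compared. What makes the tail estimate work is that $\tilde U(x+\tilde ct-a,t)=\phi(x-a,t)$ with $\phi$ $T$-periodic in $t$, so its limits as $x\to\pm\infty$ are uniform in $t$; on the other side, Theorem~\ref{th2terrace}(2) controls $\tilde u$ in the moving frame for $x>C$ and $x<-C$ directly (and the monotonicity of $\tilde u$ in $x$, as used in the paper's proof of part~(2), closes the remaining gap). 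No sublinear drift needs to be absorbed.
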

\begin{remark}\label{remark01}{\rm
Clearly, Theorem \ref{th3terrace} implies that if $f$ is one of bistable, monostable and ignition types, and satisfies (H1) and (H2), then there exists a single pulsating traveling front connecting $0$  and $\omega(\alpha,t)$ and the solution of \eqref{s11} with Heaviside type initial value always converges to the pulsating traveling front. In contrast to the existing results, our results in Theorem \ref{th3terrace} are more general and deal with some more difficult cases. For example, if  the nonlinearity $f$ is of bistable type but not satisfies the non-degenerate condition \eqref{dd}, we can get the  existence of bistable pulsating traveling front by Theorem \ref{th3terrace} if the assumptions (H1) and (H2) are further satisfied. However,  we can not get the  existence of bistable pulsating traveling front by the theories of Alikaos et al. \cite{ABC1999TAMS} and Fang and Zhao \cite{FZ}, respectively, because they required the non-degenerate condition \eqref{dd}. Let us consider \eqref{s11} with the following $f(t,u)$:
\begin{equation}\label{aaa0603}
f(t,u):=\begin{cases}
          \begin{aligned}
          (2+\sin t)u^2(u-1)^5(4-u) ,&\quad  u<1,\\
          (4+\sin t)u^2(u-1)^5(4-u) ,&\quad  u\geq 1.
          \end{aligned}
         \end{cases}
\end{equation}
It is clear that $f(t,u)\in C^2(\Bbb{R}^2)$ and the map $P$ exactly admits three isolated fixed points $0,\ 1,\ 4$. Obviously,  $0$ and $4$ are stable and $1$ is unstable, and hence, the hypothesis (F) is satisfied. It is not difficult to verify that  $f(t,u)\geq g(u):=3u^2(u-1)^5(4-u)$  for all $ u\in[0,4]$ and $t\in\Bbb{R}$. Note that $\int_{0}^{4}3u^2(u-1)^5(4-u)du>0$. Then it follows from  Du and Polacik  \cite{DP} that for any $\theta\in (1,4)$, there exists $R_\theta>0$ such that $w(x,t)$ converges locally uniformly in $x\in\Bbb{R}$ to $4$ as $t\to\infty$, where $w(x,t)$ is the solution of the following equation
\[
 \begin{cases}
          \begin{aligned}
          w_t=w_{xx}+g(w), &\quad  t>0,\\
           w(x,0):=w_0(x) ,&\quad  x\in\Bbb{R},
          \end{aligned}
         \end{cases}
\]
the initial value $w(x,0):=w_0(x)$ satisfies $w_0(x)=\theta$ for $|x|\leq R_\theta$ and $w_0(x)=0$ for $|x|> R_\theta$. Consequently, applying the comparison principle yields that the assumption (H1) holds. Furthermore, it is easy to show that (H2) holds.   Now applying Theorem \ref{th3terrace}, we have that there exists a pulsating traveling front $\tilde{U}(x,t)$ connecting two equilibria $0$ and $4$. In this case, since the equilibria $0$ and $4$ are degenerate, we can not get the existence of pulsating traveling fronts by using  the theories of Alikaos et al. \cite{ABC1999TAMS} and Fang and Zhao \cite{FZ}.}
\end{remark}

Here we also would like to give an example of a mixed nonlinearity which is not one of bistable, ignition and monostable types. Let
\[
f(t,u):=\begin{cases}
          \begin{aligned}
           \varepsilon \rho ue^{-\rho u}(u-1)(u-3)^3,&\quad  u<3,\\
          (4+\sin t)(u-3)^3(8-u),&\quad  u\geq 3,
          \end{aligned}
         \end{cases}
\]
where $\varepsilon\in (0,1]$ and $\rho>1$. For such a nonlinearity, the equation admits four isolated equilibria $u=0,1,3,8$. In particular, the nonlinearity  is monostable on $[0,1]$ and bistable on $[1,8]$ respectively. Thus, the nonlinearity is a mixed nonlinearity. Similar to the argument in Remark \ref{remark01}, it is not difficult to verify that (H1) and (H2) hold. Since $ f_u(t,0)=27\varepsilon \rho $ and $f(t,u):=\varepsilon \rho ue^{-\rho u}(u-1)(u-3)^3\leq 27\varepsilon \rho u$ on $[0,1]$, the monostable equation on $[0,1]$ admits a family of traveling wave fronts with speed $c\geq c_2^*$, where $c_2^*=2\sqrt{27\varepsilon \rho}$ is the minimal wave speed. At the same time,  by  Theorem  \ref{th3terrace},  the  bistable equation on $[1,8]$ admits a traveling wave front with speed $ c_1^*>0$. Since the equilibria $1$ and $8$ are non-degenerate, we can show that the bistable pulsating traveling front connecting $0$ and $8$ is unique up to spatial shift by the method similar to those in \cite{ABC1999TAMS}.  In particular, the wave speed $ c_1^*>0$ is also unique. It is not difficult to find that  there holds $c_1^*>c_2^*>0$ when $\varepsilon \rho\to 0$, and $0<c_1^*<c_2^* $ when $\varepsilon \rho\to \infty$. According to Theorems \ref{th1terrace} and \ref{th2terrace}, when $c_1^*>c_2^*>0$, there exists a pulsating traveling front connecting $0$ and $8$, and the solution of the equation with Heaviside type initial value converges to the single pulsating traveling front. When $0<c_1^*<c_2^* $, there exists no pulsating traveling front connecting $0$ and $8$, and the solution of the equation with Heaviside type initial value converges to the propagating terrace, which consists of  the critical monostable  traveling front with speed $c^*_2$ connecting $0$ and $1$, and the bistable pulsating traveling front with speed $c^*_1$ connecting $1$ and $8$.

The organization of the paper is as follows. In Section $2$, we show some important preliminaries of this paper including several properties of zero-number, the estimate of spreading speed of solutions of \eqref{s11} and a key lemma (Lemma \ref{lem-limitset}) on $\omega$-limit set. Section $3$ is devoted to investigated the existence and convergence of pulsating traveling fronts connecting any two of $T$-periodic solutions around a given level set. The existence and convergence of a propagating terrace (Theorems \ref{th1terrace} and \ref{th2terrace}) are proved in Section $4$.
\section{Preliminaries}

 In this  section, we present some necessary preliminaries. In Subsection 2.1 we give the definition and properties of zero-number. In Subsection 2.2 we give the definition of the $\omega$-limit set of the solution of \eqref{s11} and \eqref{s12} and establish a key lemma on the $\omega$-limit set. In Subsection 3.3 we give  estimates to the spreading speed of the solution of \eqref{s11}.

\subsection{Definition and properties of Zero-number}
We first give the definition of  zero-number to a real-valued function, which can be found in \cite{SA,DGM,DingM2018}.
\begin{definition}\label{zn}
{\rm (see \cite[Definition 2.1]{DGM})} Let $v$ be any real-value function on $\mathbb{R}$.
 \begin{itemize}
   \item[(1)] If there exist $k\in\mathbb{N}^{\ast}$ and a sequence $\left\{x_{i}\right\}_{i=1}^{k+1}$ such that
$v(x_{i})\cdot v(x_{i+1})<0$ for $i=1,\cdots,k$,
then the supremum of all the $k$ is called the zero-number of $v$, denoted by $Z[v(\,\cdot\,)]$. When such a $k$ does not exist, namely, $v$ does not change sign, we define $Z[v(\cdot)]=0$ if $v\not\equiv0$, and $Z[0]=-1$ if $v\equiv0$.
   \item[(2)] If $Z[v(\cdot)]<\infty$, then we define a word $SGN[v(\cdot)]$ by
\begin{equation*}
SGN[v(\cdot)]=\left[\,sgn(x_{1}),sgn(x_{2}),\cdots\,,sgn(x_{k+1})\,\right],
\end{equation*}
where  $x_{1}<x_2<\cdots<x_{k+1}$ is the sequence that appears in the definition of $Z[v]$ with maximal $k$. When such a $k$ does not exist, we set $SGN[v(\cdot)]=sgn(v(x))$  if $v\not\equiv0$ and $v(x)\neq0$, and $SGN [0]=[ \,\, ]$, the empty word.
 \end{itemize}
\end{definition}

It follows from Definition \ref{zn}  that $Z[v(\cdot)]$ is the number of sign changes of $v$. For a smooth function $v$ having only simple zeros on $\Bbb{R}$,  $Z[v]$ coincides with the number of zeros. In particular, the length of the word $SGN[v]$ is equal to $Z[v]+1$. For two any words $A$ and $B$  consisting of $+$ and $-$, we denote $A\rhd B$ (or, $B\lhd A$) if $B$ is a subword of $A$. Here we refer to \cite{DGM} for more details and examples.

The following two lemmas show some basic properties of $Z[\,\cdot\,]$ and $SGN[\,\cdot\,]$, which come from Lemmas 2.2 and 2.3 of Ducrot et al. \cite{DGM}.

\begin{lemma}\label{lem-zn1}
{\rm (see \cite[Lemma 2.2]{DGM} )} Let $v(x,t)\not\equiv 0$ be a bounded solution of the following equation
\[
v_{t}(x,t)=v_{xx}(x,t)+c(x,t)v(x,t),\quad  x\in\mathbb{R},\,t\in(t_{1},t_{2}),
\]
where the coefficient function $c$ is bounded. Then, for each $t\in (t_1,t_2)$, the zeros of $v(\cdot,t)$ do not accumulate in $\Bbb{R}$. Furthermore,
\begin{itemize}
\item[(1)] $Z[v(\cdot,t)]$ and $SGN[v(\cdot,t)]$ are nonincreasing in $t$, namely, for any $t'>t''$, there hold $Z[v(\cdot,t')]\leq Z[v(\cdot,t'')]$ and $SGN[v(\cdot,t')]\lhd SGN[v(\cdot,t'')]$.
Here the assertion remains true even for $t''=t_{1}$ if $v$ can be extended to a continuous function on $\mathbb{R}\times[t_{1},t_{2})$.

\item[(2)] if there exist $x'\in\mathbb{R}$ and $t'\in(t_{1},\,t_{2})$ such that $v(x',\,t')=v_{x}(x',\,t')=0$, then
$Z[v(\cdot,t)]-2\geq Z[v(\cdot,s)]\geq 0$ for any $t\in(t_{1},t')$ and $s\in(t',t_{2})$
whenever $Z[v(\cdot,t)]<\infty$.
\end{itemize}
\end{lemma}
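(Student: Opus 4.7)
The plan is to reduce the statement to the classical Sturmian zero-number theorem for one-dimensional linear parabolic equations in the form due to Angenent. The argument has three stages: (a) show that zeros of $v(\cdot,t)$ cannot accumulate on each time slice; (b) track simple zeros by the implicit function theorem to see that they move along $C^1$ curves without changing $Z$ or $SGN$; (c) analyze multiple zeros in detail to obtain a strict drop of $Z$ by at least $2$, from which both the monotonicity in (1) and the quantitative statement (2) follow.

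For stage (a), interior parabolic regularity applied to $v_t = v_{xx} + cv$ with bounded $c$ makes $v(\cdot,t_0)$ smooth in $x$ for each $t_0 \in (t_1,t_2)$. If zeros of $v(\cdot,t_0)$ accumulated at some $x_\ast$, every spatial derivative $\partial_x^k v(x_\ast,t_0)$ would vanish, and differentiating the equation in $x$ and solving for $\partial_t^j v$ in terms of spatial derivatives forces every mixed space-time derivative to vanish at $(x_\ast,t_0)$; a standard parabolic unique continuation argument then gives $v \equiv 0$, contradicting the hypothesis. Hence $v(\cdot,t)$ has only finitely many zeros in each bounded interval. Stage (b) is immediate: at a simple zero $\xi$ of $v(\cdot,t')$ the relation $v(x,t) = 0$ can be solved locally for a $C^1$ curve $x = \xi(t)$, and the sign configuration of $v$ on either side of this curve is preserved.

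The essential step, and the main obstacle, is stage (c). Assume $v(x',t') = v_x(x',t') = 0$ but $v \not\equiv 0$, and let $n \geq 2$ be the smallest integer with $\partial_x^n v(x',t') \neq 0$, which exists by stage (a). In a parabolic neighborhood of $(x',t')$ the equation is a bounded perturbation of the heat equation, and a classical computation (either via the self-similar rescaling $y = (x-x')/\sqrt{|t-t'|}$ or via a formal power-series ansatz matched against Hermite polynomials) shows that the leading behaviour of $v$ is governed by a degree-$n$ Hermite polynomial solution of the heat equation. The zero set of the latter consists of exactly $n$ curves through $(x',t')$ for $t < t'$ and at most $n-2$ for $t > t'$. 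A standard perturbative argument propagates this local count to the full equation with lower-order term $cv$, yielding $Z[v(\cdot,t)] - Z[v(\cdot,s)] \geq 2$ for $t < t' < s$ sufficiently close to $t'$, and likewise the loss of at least a two-letter subword of $SGN$ across $t'$. Combining this with stages (a) and (b), the global monotonicity of $Z$ and $SGN$ in $t$ follows by accumulation over the isolated instants where multiple zeros occur.

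The extension of (1) to $t'' = t_1$ when $v$ extends continuously to $t_1$ is a routine limit argument: apply the already-proved monotonicity on $[t_1 + \varepsilon, t]$ and let $\varepsilon \downarrow 0$, using continuity of $v$ on $\mathbb{R} \times [t_1, t_2)$ to pass to the limit in the (at most countable) sign-change count. The technically demanding ingredient is the perturbative local analysis at multiple zeros in stage (c); this is the content of Angenent's theorem, and in a full write-up the bulk of the effort would go into controlling the remainder between $v$ and its Hermite-polynomial leading part uniformly in a shrinking parabolic neighborhood of $(x',t')$.
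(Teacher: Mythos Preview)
The paper does not actually prove this lemma; it is stated as a known result, cited from \cite[Lemma~2.2]{DGM} and attributed in the text immediately following the lemma to Angenent~\cite{SA} (with statement~(1) noted to follow from statement~(2)). Your proposal is a correct high-level sketch of precisely Angenent's classical argument---non-accumulation via unique continuation, tracking of simple zeros by the implicit function theorem, and the local Hermite-polynomial analysis at multiple zeros---so your approach coincides with what the cited references contain and what the paper implicitly relies on.

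One minor remark: in your extension to $t''=t_1$, the ``routine limit argument'' is really an application of the lower semicontinuity of $Z$ and $SGN$ under pointwise convergence, which in this paper is isolated as Lemma~\ref{lem-zn2}; you may want to invoke that explicitly rather than appeal to continuity of $v$ alone, since continuity of $v$ does not by itself guarantee that sign-change counts pass to the limit in the correct direction.
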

\begin{lemma}\label{lem-zn2}
{\rm (see \cite[Lemma 2.3]{DGM})} If $\{v_{n}\}_{n\in\mathbb{N}}$ is a sequence of functions converging to $v$ pointwise on $\mathbb{R}$, then
\[
Z[v]\leq\liminf_{n\to\infty}Z[v_{n}],\quad SGN[v]\lhd\liminf_{n\to\infty}SGN[v_{n}].
\]
\end{lemma}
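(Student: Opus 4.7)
The plan is to prove both inequalities by transporting the witnesses of sign changes from the limit $v$ to $v_n$ using only pointwise convergence at finitely many nonzero points, which is where the hypothesis is strong enough.

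First I would handle the zero-number inequality. If $Z[v]=-1$ (so $v\equiv 0$) or $Z[v]=0$ the bound $Z[v]\leq \liminf Z[v_n]$ holds trivially since $Z[v_n]\geq -1$ and we may assume $Z[v_n]\geq 0$ eventually. Assume then $Z[v]=k$ with $1\leq k\leq\infty$, and let $k'\in\mathbb{N}^\ast$ with $k'\leq k$. By the definition of $Z$, there exist $x_1<x_2<\cdots<x_{k'+1}$ with $v(x_i)\cdot v(x_{i+1})<0$ for $i=1,\dots,k'$; in particular $v(x_i)\neq 0$ at every $x_i$. Pointwise convergence at each (finitely many) $x_i$ gives $v_n(x_i)\to v(x_i)$, so for all $n$ large enough we have $\mathrm{sgn}(v_n(x_i))=\mathrm{sgn}(v(x_i))$, hence $v_n(x_i)\cdot v_n(x_{i+1})<0$ for $i=1,\dots,k'$. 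By the supremum definition of $Z$, this yields $Z[v_n]\geq k'$ for $n$ large, so $\liminf_{n\to\infty}Z[v_n]\geq k'$. Letting $k'\to k$ (taking $k'$ arbitrary when $k=\infty$) gives $\liminf_{n\to\infty}Z[v_n]\geq Z[v]$.

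Next I would treat the $SGN$ statement. The relation $SGN[v]\lhd \liminf_{n\to\infty}SGN[v_n]$ should be read as: for every $n$ sufficiently large, $SGN[v]\lhd SGN[v_n]$. If $Z[v]=-1$ then $SGN[v]=[\,]$ is the empty word, which is a subword of anything, so the claim is trivial. If $Z[v]=k\in\{0,1,\ldots,\infty\}$, fix any finite prefix of $SGN[v]$ of length $m+1$ realized at points $x_1<\cdots<x_{m+1}$ with $v(x_i)\neq 0$ and the corresponding sign sequence $[\epsilon_1,\ldots,\epsilon_{m+1}]$. As above, for all $n$ large enough $\mathrm{sgn}(v_n(x_i))=\epsilon_i$ at each of these finitely many points, so reading off the sign of $v_n$ along the ordered list $x_1<\cdots<x_{m+1}$ produces the word $[\epsilon_1,\ldots,\epsilon_{m+1}]$; between consecutive $x_i$'s the word $SGN[v_n]$ may contain additional sign flips, but by the definition of a subword the string $[\epsilon_1,\ldots,\epsilon_{m+1}]$ appears in $SGN[v_n]$. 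Thus this prefix of $SGN[v]$ lies in $SGN[v_n]$ for all $n$ large. When $Z[v]<\infty$ we may take $m=Z[v]$ directly, giving $SGN[v]\lhd SGN[v_n]$ eventually; when $Z[v]=\infty$ the conclusion holds in the sense that any finite prefix eventually appears.

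There is essentially no single hard step here; the argument is a direct transfer of finitely many nonzero-value witnesses via pointwise convergence. The only minor subtlety I would be careful about is bookkeeping in the $Z[v]=\infty$ and $Z[v]=0$ boundary cases, and being explicit about the interpretation of $\lhd\liminf$ as ``subword of $SGN[v_n]$ for all sufficiently large $n$,'' so that Lemma \ref{lem-zn2} is later applicable in the exact form the authors use in subsequent sections.
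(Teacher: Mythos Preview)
The paper does not give its own proof of this lemma; it is simply quoted from \cite[Lemma~2.3]{DGM} and used as a black box. Your argument---transferring the finitely many nonzero sign witnesses from $v$ to $v_n$ via pointwise convergence---is exactly the standard elementary proof and is correct; the only cosmetic point is that $SGN[v]$ is defined in the paper only when $Z[v]<\infty$, so your discussion of the case $Z[v]=\infty$ for the $SGN$ inequality is unnecessary (the statement is vacuous there), while for $Z$ your $k'\to\infty$ device handles it cleanly.
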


Statement (1) of Lemma \ref{lem-zn1} follows from statement (2), while statement (2) is essentially due to \cite{SA} (see also \cite{DM2010JEMS}). Lemma \ref{lem-zn2} shows that $Z[\cdot]$ and $SGN[\cdot]$ are semi-continuous under the sense of the pointwise convergence.
Using these two lemmas, we can get the following lemma for the solutions of \eqref{s11}.  The proof is completely similar to those of \cite[Lemma 2.4]{DGM}, so we omit it.

\begin{lemma}\label{lem-zn3}
Let $u_{1}$ and $u_{2}$ be solutions of \eqref{s11} with initial values $u_1(x,0)$ and $u_2(x,0)$ respectively, where $u_{1}(x,0)$ is a piecewise continuous bounded function on $\Bbb{R}$ and $u_{2}(x,0)$ is continuous and bounded in $\mathbb{R}$. If $Z[u_{1}(\cdot,0)-u_{2}(\cdot,0)]<\infty$, then the following statements hold:

(1) for any $0\leq t<t'<\infty$, one has
\[
Z[u_{1}(\cdot,t')-u_{2}(\cdot,t')]\leq Z[u_{1}(\cdot,t)-u_{2}(\cdot,t)],
\]
\[
SGN[u_{1}(\cdot,t')-u_{2}(\cdot,t')]\lhd SGN[u_{1}(\cdot,t)-u_{2}(\cdot,t)];
\]

(2) if there exists $t'>0$ such that the graph of $u_{1}(\cdot,t')$ and that of $u_{2}(\cdot,t')$ are tangential at some point in $\mathbb{R}$, and $u_{1}\not\equiv u_{2}$, then, for any $t$ and $s$ with $0\leq t<t'<s$, one has
\[
Z[u_{1}(\cdot,t)-u_{2}(\cdot,t)]-2\geq Z[u_{1}(\cdot,s)-u_{2}(\cdot,s)]\geq0.
\]
If $u_1$ and $u_2$ are entire solutions of \eqref{s11}, then the same conclusion remains true for any $t'\in\Bbb{R}$ and $-\infty<t<t'<s<\infty$.
\end{lemma}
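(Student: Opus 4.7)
The natural approach is to apply Lemma \ref{lem-zn1} and Lemma \ref{lem-zn2} to the difference $w(x,t):=u_{1}(x,t)-u_{2}(x,t)$, which satisfies the linear parabolic equation
\[
w_{t}=w_{xx}+c(x,t)w,\qquad c(x,t):=\int_{0}^{1}f_{u}\!\bigl(t,su_{1}(x,t)+(1-s)u_{2}(x,t)\bigr)\,ds,
\]
with $c$ bounded on $\mathbb{R}\times[0,T']$ for every $T'>0$ (since $u_{1}$ and $u_{2}$ are bounded). The only obstruction to a direct application is the roughness of $u_{1}(\cdot,0)$, so the plan splits into two stages: handle $t>0$ via parabolic smoothing, then transfer the bounds down to $t=0$ by approximation.

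On $(0,\infty)$ classical parabolic regularity yields $u_{1}(\cdot,t),u_{2}(\cdot,t)\in C^{2}(\mathbb{R})$ and bounded together with their derivatives, so $w$ is a classical solution. Applying Lemma \ref{lem-zn1}(1) and (2) on any subinterval $(t_{0},\infty)$ with $t_{0}>0$ immediately gives the monotonicity of $Z[w(\cdot,t)]$ and $SGN[w(\cdot,t)]$ in (1), together with the ``drop by $2$'' statement at a tangency time $t'>0$ in (2), restricted to $t_{0}\leq t<t'<s$. To push these to $t=0$, choose a sequence $\{u_{1,n}(\cdot,0)\}$ of continuous bounded functions on $\mathbb{R}$ such that $u_{1,n}(\cdot,0)\to u_{1}(\cdot,0)$ pointwise at every continuity point and
\[
Z[u_{1,n}(\cdot,0)-u_{2}(\cdot,0)]\leq Z[u_{1}(\cdot,0)-u_{2}(\cdot,0)],\quad SGN[u_{1,n}(\cdot,0)-u_{2}(\cdot,0)]\lhd SGN[u_{1}(\cdot,0)-u_{2}(\cdot,0)].
\]
Since the hypothesis $Z[u_{1}(\cdot,0)-u_{2}(\cdot,0)]<\infty$ and piecewise continuity of $u_{1}(\cdot,0)$ together confine the discontinuities and sign changes to finitely many points, the required $u_{1,n}$ can be produced by smoothly interpolating across each jump on a vanishing neighbourhood in such a way that the interpolant stays on the same side of the (continuous) function $u_{2}(\cdot,0)$ as the adjacent one-sided limits of $u_{1}(\cdot,0)$. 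Let $u_{1,n}(x,t)$ be the solution of \eqref{s11} issuing from $u_{1,n}(\cdot,0)$; continuous dependence on initial data gives $u_{1,n}(\cdot,t)\to u_{1}(\cdot,t)$ locally uniformly for each fixed $t>0$.

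Now for every $t>0$ apply Lemma \ref{lem-zn1}(1), valid up to $t_{1}=0$ because $u_{1,n}-u_{2}$ is continuous on $\mathbb{R}\times[0,\infty)$, to obtain
\[
Z[u_{1,n}(\cdot,t)-u_{2}(\cdot,t)]\leq Z[u_{1,n}(\cdot,0)-u_{2}(\cdot,0)]\leq Z[u_{1}(\cdot,0)-u_{2}(\cdot,0)],
\]
and the corresponding $SGN$ inclusion. Passing to the limit $n\to\infty$ by Lemma \ref{lem-zn2} yields $Z[w(\cdot,t)]\leq Z[w(\cdot,0)]$ and $SGN[w(\cdot,t)]\lhd SGN[w(\cdot,0)]$, which combined with the monotonicity on $(0,\infty)$ proves (1). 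For (2) with $0<t<t'<s$ the result is immediate from Lemma \ref{lem-zn1}(2); for the case $t=0$, apply Lemma \ref{lem-zn1}(2) on $(0,\infty)$ with some $\tau\in(0,t')$ to get $Z[w(\cdot,\tau)]-2\geq Z[w(\cdot,s)]\geq 0$, then combine with the bound $Z[w(\cdot,\tau)]\leq Z[w(\cdot,0)]$ just established. For entire solutions the linear equation holds on all of $\mathbb{R}\times\mathbb{R}$ with globally bounded coefficient, so Lemma \ref{lem-zn1} applies directly at any $t'\in\mathbb{R}$ without the approximation step. The main technical hurdle is the construction of the $u_{1,n}$ with the sharpened zero-number and $SGN$ bounds: one must mollify around each discontinuity carefully, exploiting the continuity of $u_{2}(\cdot,0)$ and the finiteness of the discontinuity set, so as not to manufacture spurious sign changes of $u_{1,n}-u_{2}$.
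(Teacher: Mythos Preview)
Your proposal is correct and follows exactly the route the paper intends: the paper omits the proof entirely, stating only that it is ``completely similar to those of \cite[Lemma~2.4]{DGM}''. Your argument---form the difference $w=u_1-u_2$, observe it solves a linear parabolic equation with bounded coefficient, apply Lemma~\ref{lem-zn1} on $(0,\infty)$, and bridge to $t=0$ by approximating the discontinuous initial datum with continuous ones chosen not to increase the sign-change count, then invoke Lemma~\ref{lem-zn2}---is precisely the standard approach of \cite{DGM}.
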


 Here we would like to notice that an entire solution of \eqref{s11} means a  solution defined for all $t\in\mathbb{R}$. Now we introduce a so-called ``steeper" notion, which describes the intersection of the graphs of two entire solutions of \eqref{s11}. Such a notion was first introduced by Ducrot et al. \cite[Definition 1.6]{DGM}.
\begin{definition}\label{steeper}
Let $u_{1}$ and $u_{2}$ be two entire solutions of \eqref{s11}. We say that $u_{1}$ is steeper than $u_{2}$ if  for any $t' \in\mathbb{R}$, $x_{1}\in\mathbb{R}$ and $k\in\mathbb{Z}$ such that $u_{1}(x_{1},t')=u_{2}(x_{1},t'+kT)$, we have either
$u_{1}(\cdot,\cdot+t')\equiv u_{2}(\cdot,\cdot+t'+kT)$  or $\partial _{x}u_{1}(x_{1},t')<\partial _{x}u_{2}(x_{1},t'+kT)$.
 \end{definition}

We note that Definition \ref{steeper} is slightly different from Definition 1.6 in \cite{DGM} since we are considering a time periodic equation.  The above definition implies that if  $u_{1}$ is steeper than $u_{2}$, then for any $k\in\mathbb{Z}$, either the graph of $u_1(\cdot,\cdot)$ is identical with that of $u_2(\cdot,\cdot+kT)$ or  they can intersect at most once. If their graphs never intersect, then they are steeper than each other. Based on the above definition, we have the following observation.

\begin{proposition}\label{prosteeper}
Let $u_{1}$ and $u_{2}$ be two entire solutions of \eqref{s11}. Suppose that
\[
SGN[u_{1}(\cdot,t')-u_{2}(\cdot,t'+kT)]\lhd [+\,-] \quad {\rm for\ any}\ t'\in\mathbb{R}\ {\rm and}\ k\in\mathbb{Z}.
\]
Then $u_{1}$ is steeper than $u_{2}$.
\end{proposition}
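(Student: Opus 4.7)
The plan is to fix $t'\in\mathbb{R}$, $k\in\mathbb{Z}$ and $x_1\in\mathbb{R}$ with $u_1(x_1,t')=u_2(x_1,t'+kT)$, assume $u_1(\cdot,\cdot+t')\not\equiv u_2(\cdot,\cdot+t'+kT)$, and derive $\partial_xu_1(x_1,t')<\partial_xu_2(x_1,t'+kT)$. Set
\[
w(x,s):=u_1(x,s+t')-u_2(x,s+t'+kT).
\]
Using the $T$-periodicity of $f$, both $u_1(\cdot,\cdot+t')$ and $u_2(\cdot,\cdot+t'+kT)$ solve the same equation $v_s=v_{xx}+f(s+t',v)$; hence $w$ satisfies a linear parabolic equation $w_s=w_{xx}+c(x,s)w$ with bounded coefficient $c(x,s):=\int_0^1 f_u(s+t',\theta u_1+(1-\theta)u_2)\,d\theta$ (boundedness coming from the fact that the entire solutions considered here take values in a compact set on which $f_u$ is bounded). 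Since $w\not\equiv 0$, Lemma \ref{lem-zn1} applies to $w$, so in particular the zeros of $w(\cdot,s)$ are isolated for every $s$.

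Now I would argue by contradiction on the sign of $\partial_xw(x_1,0)$. Suppose first that $\partial_xw(x_1,0)>0$. Then in a small neighborhood of $x_1$, $w(\cdot,0)$ is strictly negative to the left of $x_1$ and strictly positive to the right. This forces the word $SGN[w(\cdot,0)]$ to contain $[-\,+]$ as a subword. But $SGN[w(\cdot,0)]\lhd[+\,-]$ by hypothesis (applied with the shifted times), and the subwords of $[+\,-]$ are only $[\,]$, $[+]$, $[-]$ and $[+\,-]$, none of which contains $[-\,+]$. Contradiction.

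Next suppose $\partial_xw(x_1,0)=0$. Then $x_1$ is a multiple zero of $w(\cdot,0)$, and Lemma \ref{lem-zn1}(2) (applied on a time interval around $0$, which is legitimate since $u_1,u_2$ are entire solutions) gives
\[
Z[w(\cdot,-\tau)]\geq Z[w(\cdot,s)]+2\geq 2
\]
for any small $\tau>0$ and $s>0$. Thus $SGN[w(\cdot,-\tau)]$ has length at least $3$, which again contradicts the hypothesis $SGN[w(\cdot,-\tau)]\lhd[+\,-]$ applied at time $-\tau+t'$ with shift $k$. Hence $\partial_xw(x_1,0)<0$, i.e.\ $\partial_xu_1(x_1,t')<\partial_xu_2(x_1,t'+kT)$, which is the desired conclusion.

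The argument is essentially bookkeeping with the sign-word calculus; the only subtle point is the contradiction in the tangency case, which I expect to be the main obstacle only insofar as one must verify that Lemma \ref{lem-zn1}(2) is applicable to $w$ on an open time interval containing $0$. This reduces to checking that $w$ and the coefficient $c$ are bounded on strips $\mathbb{R}\times[-\tau,s]$, which follows from the boundedness of the entire solutions $u_1,u_2$ and the smoothness of $f$; once this is in place, the two-case dichotomy above yields the proposition.
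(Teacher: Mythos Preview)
Your proof is correct and follows essentially the same approach as the paper: both arguments reduce to showing that the difference $w$ cannot have a multiple zero at $(x_1,0)$ (via the zero-number drop in Lemma~\ref{lem-zn1}(2)/Lemma~\ref{lem-zn3}(2), which would force $Z\geq 2$ at an earlier time and contradict $SGN\lhd[+\,-]$), and then read off the sign of $\partial_x w(x_1,0)$ from the $SGN$ hypothesis. The only cosmetic difference is that the paper first establishes simplicity of the zero and then determines the sign, whereas you split directly into the three cases $\partial_x w(x_1,0)>0$, $=0$, $<0$.
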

\begin{proof}
Fix any $t'\in\mathbb{R}$ and $k\in\mathbb{Z}$.
According to the assumption, we have
\begin{equation}\label{s21}
SGN[u_{1}(\cdot,t+t')-u_{2}(\cdot,t+t'+kT)]\lhd[+\,-],\quad\forall t\in\Bbb{R},
\end{equation}
which implies that
\[
Z[u_{1}(\cdot,t+t')-u_{2}(\cdot,t+t'+kT)]\leq 1,\quad\forall t\in\Bbb{R}.
\]
If $u_{1}(\cdot,\cdot+t')\not\equiv u_{2}(\cdot,\cdot+t'+kT)$, then from Lemma \ref{lem-zn3} (2), the function $u_{1}(\cdot,t')-u_{2}(\cdot,t'+kT)$ admits at most one zero on $\Bbb{R}$, and the zero must be simple. Suppose the zero is $x_1\in\Bbb{R}$. Then the simplicity of the zero, the equality $u_{1}(x_{1},t')-u_{2}(x_{1},t'+kT)=0$ and  \eqref{s21} imply $\partial_{x}u_{1}(x_{1},t')<\partial_{x}u_{2}(x_{1},t'+kT)$. By Definition \ref{steeper},  we have that $u_{1}$ is steeper than $u_{2}$. This completes the proof.
\end{proof}

\subsection{$\omega$-limit set of the solution $\tilde{u}(x,t;a)$}
\noindent

It is known that $\omega$-limit set can describe the long-time behavior of solution in fixed compact regions. In the following we give the definition of $\omega$-limit set of the solution of the Cauchy problem \eqref{s11} and \eqref{s12}, which is slightly different from the standard one. Such a  definition is useful to capture the asymptotic behavior of solution of \eqref{s11} in various moving coordinate frames, see also \cite{DGM,PMAMS,PSIAM2017}.
\begin{definition}\label{deflimits}
Let $u$ be any bounded solution of the Cauchy problem \eqref{s11} and \eqref{s12}. If there exist two sequences $\{x_{j}\}_{j\in\mathbb{N}}\subset\mathbb{R}$  and $\{k_{j}\}_{j\in\mathbb{N}}\subset\mathbb{Z}$ with $k_{j}\to\infty \,(j\to\infty)$ such that
\[
u(\cdot+x_{j},\cdot+k_{j}T)\to w(\cdot,\cdot) \quad \text{as}\ j\to\infty\quad\text{locally uniformly on}\,\,\mathbb{R}^{2},
\]
then we call $w$ an $\omega$-limit orbit of solution $u$.
\end{definition}

As mentioned by Ducrot et al. \cite{DGM}, the above convergence takes place in $C^2$ in $x$ and $C^1$ in $t$, so any $\omega$-limit orbit of solution $u$ is an entire solution of \eqref{s11}. In particular,  if $w(x,t)$ is an $\omega$-limit orbit of solution $u$, then for any $y\in\Bbb{R}$ and $k\in\Bbb{Z}$, $w(x+y,t+kT)$ is also an $\omega$-limit orbit. According such a definition, we have the following key lemma on the $\omega$-limit set of the solution $\tilde{u}(x,t;a)$. 
\begin{lemma}\label{lem-limitset}
For $a\in\mathbb{R}$. Let $w_{1}(x,t)$ be any $\omega$-limit orbit of $\tilde{u}(x,t;a)$. Then for any entire solution $w(x,t)$ of \eqref{s11} satisfying $0\leq w(x,t)\leq \omega(\alpha,t)$ for all $(x,t)\in\Bbb{R}^2$, $w_{1}(x,t)$ is steeper than $w(x,t)$ in the sense of Definition \ref{steeper}.
\end{lemma}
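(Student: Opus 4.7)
The plan is to invoke Proposition~\ref{prosteeper}: to conclude that $w_{1}$ is steeper than $w$, it suffices to verify that
\[
SGN\bigl[w_{1}(\cdot,t')-w(\cdot,t'+kT)\bigr]\lhd [+\,-]
\]
for every $t'\in\mathbb{R}$ and every $k\in\mathbb{Z}$, and I would obtain this sign-count bound by passing to the limit in an analogous bound for a sequence of space-time shifts of $\tilde{u}(\cdot,\cdot;a)$.

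Fix $t'\in\mathbb{R}$ and $k\in\mathbb{Z}$. By Definition~\ref{deflimits} there are sequences $\{x_{j}\}\subset\mathbb{R}$ and $\{k_{j}\}\subset\mathbb{Z}$ with $k_{j}\to\infty$ such that $u_{j}(x,t):=\tilde{u}(x+x_{j},t+k_{j}T;a)$ converges to $w_{1}$ locally uniformly on $\mathbb{R}^{2}$. By the $T$-periodicity of $f$, each $u_{j}$ solves \eqref{s11} on $\{t\geq -k_{j}T\}$, and $(x,t)\mapsto w(x,t+kT)$ is an entire solution of \eqref{s11} with $0\leq w(x,t+kT)\leq \omega(\alpha,t+kT)=\omega(\alpha,t)$, using the $T$-periodicity of $\omega(\alpha,\cdot)$. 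At the initial time $t=-k_{j}T$ the function $u_{j}(\cdot,-k_{j}T)=\alpha H\bigl((a-x_{j})-\cdot\bigr)$ is a step of height $\alpha$, while the competitor $w(\cdot,-k_{j}T+kT)$ lies between $0$ and $\omega(\alpha,0)=\alpha$; hence the difference is non-negative for $x\leq a-x_{j}$ and non-positive for $x>a-x_{j}$, so
\[
SGN\bigl[u_{j}(\cdot,-k_{j}T)-w(\cdot,-k_{j}T+kT)\bigr]\lhd [+\,-]
\]
and in particular has finite zero number. Lemma~\ref{lem-zn3}(1), applied to the pair $(u_{j},\,w(\cdot,\cdot+kT))$ on the half-line $\{t\geq -k_{j}T\}$ (whose hypotheses hold since $u_{j}(\cdot,-k_{j}T)$ is piecewise continuous and bounded while $w(\cdot,-k_{j}T+kT)$ is continuous and bounded), then propagates this sign pattern to every $t\geq -k_{j}T$. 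Choosing $j$ large enough that $-k_{j}T\leq t'$ gives $SGN[u_{j}(\cdot,t')-w(\cdot,t'+kT)]\lhd [+\,-]$; passing to the limit $j\to\infty$ via Lemma~\ref{lem-zn2} and the pointwise convergence $u_{j}(\cdot,t')\to w_{1}(\cdot,t')$ yields $SGN[w_{1}(\cdot,t')-w(\cdot,t'+kT)]\lhd [+\,-]$, and Proposition~\ref{prosteeper} closes the argument.

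I do not anticipate a conceptual obstacle; the proof is essentially bookkeeping for the two time shifts involved, namely the $k_{j}T$-shift dictated by the $\omega$-limit definition and the $kT$-shift appearing in the definition of steepness. The one delicate point is aligning them so that the Heaviside datum of $\tilde{u}$ is tested against $w(\cdot,-k_{j}T+kT)$ at an instant where the latter is still bounded above by $\alpha$; this is exactly what the $T$-periodicity of $\omega(\alpha,\cdot)$ provides, by normalising every such instant back to $0$ modulo $T$.
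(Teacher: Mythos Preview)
Your proposal is correct and follows essentially the same route as the paper: compare the shifted solution $\tilde u(\cdot+x_{j},\cdot+k_{j}T;a)$ with the time-shifted entire solution $w(\cdot,\cdot+kT)$, read off $SGN\lhd[+\,-]$ at the Heaviside initial instant, propagate forward by the zero-number lemma, pass to the limit via Lemma~\ref{lem-zn2}, and conclude with Proposition~\ref{prosteeper}. Your citation of Lemma~\ref{lem-zn3} (rather than Lemma~\ref{lem-zn1}) is in fact the more accurate one, since the Heaviside datum is only piecewise continuous.
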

\begin{proof}
It follows from  Definition \ref{deflimits}, there exist  $\{x_{j}\}_{j\in\mathbb{N}}\subset\mathbb{R}$ and $\{k_{j}\}_{j\in\mathbb{N}}\subset\mathbb{Z}$ with $k_{j}\to\infty \,(j\to\infty)$ such that
\[
\tilde{u}(\cdot+x_{j},\,\cdot+k_{j}T;a)\to w_{1}(\cdot,\cdot) \quad {\rm as}\ j\to\infty \quad {\rm in}\ C_{loc}^{2,1}(\Bbb{R}^2).
\]
Let $w(x,t)$ be  any entire solution  of \eqref{s11} satisfying $0\leq w(x,t)\leq \omega(\alpha,t)$ for all $(x,t)\in\Bbb{R}^2$. Clearly, for any $i\in\mathbb{Z}$ and and $j\in\Bbb{N}$, we have $w(x-x_j,iT-k_jT )\leq \alpha=\tilde{u}(x,0;a)$ for $x\leq a$ and $w(x-x_j,iT-k_jT )\geq 0=\tilde{u}(x,0;a)$ for $x>a$, which implies that the function $\tilde{u}(\cdot,0;a)-w(\cdot-x_j,iT-k_jT )$ changes sign just once at point $x=a$. Therefore, there are
\begin{equation}\label{le1limits1}
SGN[\tilde{u}(\cdot+x_j,0;a)-w(\cdot,iT-k_{j}T)]=[+\,-]\ \ {\rm and}\  Z[\tilde{u}(\cdot+x_j,0;a)-w(\cdot ,iT-k_{j}T)]=1
\end{equation}
 for any $i\in\Bbb{Z}$ and $j\in\Bbb{N}$.
Applying Lemma \ref{lem-zn1}, for any $t\geq-k_{j}T$, one has
\[SGN[\tilde{u}(\cdot+x_j,t+k_{j}T;a)-w(\cdot,t+iT)]\lhd SGN[\tilde{u}(\cdot+x_j,0;a)-w(\cdot,iT-k_{j}T )],\]
\[Z[\tilde{u}(\cdot+x_j,t+k_{j}T;a)-w(\cdot,t+iT )]\leq Z[\tilde{u}(\cdot+x_j,0;a)-w(\cdot,iT-k_{j}T )],\]
which combining with \eqref{le1limits1} yields
\[
SGN[\tilde{u}(\cdot+x_j,t+k_{j}T;a)-w(\cdot,t+iT )]\lhd [+\,-]\ {\rm and}\ Z[\tilde{u}(\cdot+x_j,t+k_{j}T;a)-w(\cdot,t+iT)]\leq 1
\]
for any $i\in\Bbb{Z}$ and $j\in\Bbb{N}$. Letting $j\to\infty$, by Lemma \ref{lem-zn2} we obtain
\[
SGN[w_{1}(\cdot,t )-w(\cdot,t+iT )]\lhd [+\,-]\ {\rm and}\
 Z[w_{1}(\cdot,t )-w(\cdot,t+iT )]\leq 1
 \]
 for all $t\in\mathbb{R}$ and $i\in\mathbb{Z}$. Thus, applying Proposition \ref{prosteeper} gives that $w_{1}$ is steeper than $w$. This completes the proof.
\end{proof}

\subsection{Spreading speed of solution $\tilde{u}(x,t;a)$}
\noindent

 In this subsection, we establish the spreading properties of the solution $\tilde{u}(x,t;a)$ of \eqref{s11}. This result will be used repeatedly later.
\begin{lemma}\label{le1speed}
There exist $0<c_{\ast}<c^{\ast}$ such that

(1) for any $c>c^{\ast}$, $\lim_{t\to\infty}\sup_{ x\geq ct} |\tilde{u}(x,t;a)|=0$;

(2) for any $0<c<c_{\ast}$, $\lim_{t\to\infty}\sup_{x\leq ct} |\tilde{u}(x,t;a)-\omega(\alpha,t)|=0$.
\end{lemma}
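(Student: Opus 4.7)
The argument separates into an easy upper bound (1) and a more delicate lower bound (2).

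For the upper bound I would use a linear comparison. Since $f\in C^{2}$ with $f(t,0)\equiv 0$, a Taylor estimate supplies $K>0$ with $f(t,u)\leq Ku$ for all $t\in\mathbb{R}$ and $u\in[0,\alpha]$. The solution $v$ of $v_{t}=v_{xx}+Kv$ with Heaviside initial datum $\alpha H(a-\cdot)$ dominates $\tilde u$ by the comparison principle, and is given explicitly by $v(x,t)=\tfrac{\alpha}{2}e^{Kt}\operatorname{erfc}\bigl((x-a)/(2\sqrt{t})\bigr)$. Its $x$-monotonicity together with the Gaussian tail estimate $\operatorname{erfc}(s)\leq e^{-s^{2}}/(s\sqrt{\pi})$ give $\sup_{x\geq ct}\tilde u(x,t;a)\leq v(ct,t)\to 0$ for every $c>2\sqrt{K}$, so (1) holds with $c^{\ast}:=2\sqrt{K}$.

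For the lower bound I would exploit (H1). Choose $\underline{u}_{0},\underline{u}$ as there; continuity and compact support give $M:=\sup\underline{u}_{0}<\alpha$ and $\operatorname{supp}\underline{u}_{0}\subset[-R_{0},R_{0}]$. For every $y\leq a-R_{0}$ the translate $\underline{u}_{0}(\cdot-y)$ is bounded by $\alpha$ and supported in $(-\infty,a]$, hence lies pointwise below $\tilde u(\cdot,0;a)$; the comparison principle then yields
\[
\tilde u(x,t;a)\geq \underline{u}(x-y,t),\qquad (x,t)\in\mathbb{R}\times[0,\infty),\ y\leq a-R_{0},
\]
an estimate I will refer to as $(\ast)$. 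Statement (2) thus reduces to producing $c_{\ast}>0$ such that $\underline{u}(x,t)\to\omega(\alpha,t)$ uniformly on $|x|\leq c_{\ast}t$. To that end I would iterate a translate-and-compare step in the time-$T$ semiflow: for any $L>0$, the local uniform convergence in (H1) on $[-L-R_{0},L+R_{0}]$ supplies $n_{L}\in\mathbb{N}$ with $\underline{u}(\cdot,n_{L}T)\geq M\geq\underline{u}_{0}(\cdot-L)$ pointwise; the $T$-periodicity of $f$ and comparison upgrade this to $\underline{u}(x,t+n_{L}T)\geq\underline{u}(x-L,t)$, and iterating gives $\underline{u}(x,kn_{L}T)\geq\underline{u}_{0}(x-kL)$. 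Hence the right front of $\underline{u}$ advances at average speed at least $L/(n_{L}T)$, and reflection handles the left. Combining this positive advance with the local convergence in (H1) (which upgrades the ``$>0$'' lower bound inside the advancing region to the full convergence ``$\to\alpha$'') and feeding back through $(\ast)$ delivers (2).

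The decisive obstacle is the positivity of $c_{\ast}$, equivalently showing $n_{L}$ can be chosen to grow at most linearly in $L$. I would handle this by a plateau-extension bootstrap: if $\underline{u}(\cdot,nT)\geq\alpha-\delta$ on $[-\ell,\ell]$, then there exist $N\in\mathbb{N}$ and $\rho>0$ depending only on $\delta$ (and not on $\ell$) such that $\underline{u}(\cdot,(n+N)T)\geq\alpha-\delta$ on $[-\ell-\rho,\ell+\rho]$. This step relies on the stability-from-below of $\omega(\alpha,\cdot)$ implicit in (H1) together with the observation that the dynamics near the plateau's moving boundary is $x$-translation invariant and determined by the local boundary profile alone, not by $\ell$; it yields $c_{\ast}\geq\rho/(NT)>0$ and closes the argument.
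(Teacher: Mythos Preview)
Your argument for (1) is correct and essentially identical to the paper's: both linearize $f(t,u)\le Ku$ and arrive at $c^{\ast}=2\sqrt{K}$, the paper via the explicit traveling supersolution $\min\{\omega(\alpha,t),\,\alpha e^{-\sqrt{K}(x-2\sqrt{K}t-a)}\}$, you via the heat-kernel representation.

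For (2) your setup --- the translate-and-compare inequality $(\ast)$ and the iteration step $\underline{u}(\cdot,t+n_{L}T)\ge \underline{u}(\cdot-L,t)$ --- is exactly what the paper does. But your final paragraph manufactures a difficulty that is not there. You do \emph{not} need $n_{L}$ to grow at most linearly in $L$: a single fixed $L$ already gives a positive advance speed $L/(n_{L}T)$, and that is all that is required. The paper simply takes $L=1$, finds one $k\in\mathbb{N}$ with $\underline{u}(\cdot,kT)\ge\max\{\underline{u}_{0}(\cdot),\underline{u}_{0}(\cdot-1)\}$, iterates to get $\underline{u}(x,nkT)\ge\max_{0\le m\le n}\underline{u}_{0}(x-m)$, and sets $c_{\ast}=1/(kT)$. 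Your ``plateau-extension bootstrap'' is therefore unnecessary, and as stated it is also not fully justified (the claimed uniformity in $\ell$ of $N,\rho$ is asserted rather than proved).

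There is also a small efficiency loss in your reduction. You reduce (2) to a spreading statement for $\underline{u}$ itself and then plan to feed that back through $(\ast)$. The paper avoids this detour: since $(\ast)$ holds for \emph{all} integer translates $y\le a-R_{0}$ simultaneously, one obtains directly
\[
\tilde{u}(x,t+nkT;a)\ \ge\ \max_{\,i\le n,\ i\in\mathbb{Z}}\underline{u}(x-i,t),
\]
and for $x\le n$ one may choose $i$ so that $x-i\in[0,1]$, whereupon the right-hand side converges to $\omega(\alpha,t)$ uniformly by the local convergence in (H1). This delivers the uniform estimate on $\{x\le ct\}$ in one stroke, without first establishing spreading of $\underline{u}$.
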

\begin{proof}(1) Let $M=\max\limits_{t\in\mathbb{R}}\omega(\alpha,t)$ and $K=\max\limits_{[0,\infty)\times[0,M]}|f_{u}(t,u)|$. For all $t\in\Bbb{R}$ and $0\leq u\leq M$, we have
$|f(t,u)|\leq Ku$.
It is easy to show that the function $\bar u(x,t)=\min\left\{\omega(\alpha,t),\,\,\alpha e^{-\sqrt{K}\left(x-2\sqrt{K}t-a\right)}\right\}$
is a super-solution of \eqref{s11}.
Since $\bar{u}(x,0)\geq \tilde{u}(x,0;a)$ for any $x\in\mathbb{R}$, by the comparison principle, one has $\tilde{u}(x,t;a)\leq\bar{u}(x,t)$ for all $x\in\mathbb{R}$ and $t\geq 0$. Let $c^{\ast}=2\sqrt{K}$. Then for any $c>c^{\ast}$, we can easily show that $\lim_{t\to\infty}\sup_{ x\geq ct} |\tilde{u}(x,t;a)|=0$.

(2) Let $\underline{u}_{0}$ be the compactly supported function given in the hypothesis  (H1). It follows from the hypothesis  (H1) that the solution $\underline{u}(x,t)$ of \eqref{s11}  with initial value $\underline{u}_{0}$ converges to $\omega(\alpha,t)$ as $t\to\infty$ locally uniformly in $x\in\Bbb{R}$. Without loss of generality, we assume  that $supp(\underline{u}_{0})\subset(-\infty,a\,]$. Thus there exists $k\in\mathbb{N }$ such that
\begin{equation}\label{speed1}
\underline{u}(x,kT)\geq\max\left\{\underline{u}_{0}(x),\,\underline{u}_{0}(x-1)\right\},\,\forall x\in\mathbb{R}.
\end{equation}
Using the comparison principle, we have
\[
\underline{u}(x,t+kT)\geq\max\left\{\underline{u}(x,t),\,\underline{u}(x-1,t)\right\},\,\forall t\geq 0.
\]
Combining the above statement with \eqref{speed1}, we have 
\[\underline{u}(x,2kT)\geq\max\left\{\underline{u}_{0}(x-m)\,|\,0\leq m\leq2,m\in\mathbb{N}\right\}.\]
By the comparison principle and induction, we finally obtain that
\begin{equation}\label{speed2}
\underline{u}(x,nkT)\geq\max\left\{\underline{u}_{0}(x-m)\,|\,0\leq m\leq n,m\in\mathbb{N}\right\},\,\forall x\in\Bbb{R},\ n\in\mathbb{N}.
\end{equation}
Since $supp(\underline{u}_{0})\subset(-\infty,a]$, then $\tilde{u}(x,0;a)\geq\max\{\underline{u}_{0}(x+i)\,|\,i\in\mathbb{N}\}$. Using the comparison principle again, one has
\begin{equation}\label{speed3}
\tilde{u}(x,t;a)\geq\max\left\{\underline{u}(x+i,t)\,|\,i\in\mathbb{N}\right\},\quad \forall\ x\in\Bbb{R},\ t\geq0.
\end{equation}
Combining \eqref{speed2} with \eqref{speed3}, we get
\[
\tilde{u}(x,nkT;a)\geq\max\left\{\underline{u}_{0}(x-i)\,|\,i\leq n,\,i\in\mathbb{Z}\right\} ,\quad \forall\ x\in\Bbb{R},\  n\in\mathbb{N}.
\]
Applying the comparison principle yields
\begin{equation}\label{speed4}
\tilde{u}(x,t+nkT;a)\geq\max\left\{\underline{u}(x-i,t)\,|\,i\leq n,\,\,\,i\in\mathbb{Z}\right\},\,\forall t\geq 0,\ n\in\mathbb{N},\ x\in\mathbb{R}.
\end{equation}
Due to the convergence of $\underline{u}(x,t)$ as $t\to\infty$, here we have
\begin{equation}\label{speed5}
\max\left\{\underline{u}(x-i,t)\,|\,i\leq n,\,i\in\mathbb{Z}\right\}\to\omega(\alpha,t)\quad {\rm as}\ t\to\infty
\end{equation}
with respect to $n\in\Bbb{N}$ and $x\in(-\infty,n]$.

  Let $c_{\ast}=\frac{1}{kT}$ and $0<c<c_{\ast}$. For any $t\geq0$, we denote
$\tau(t):=t-[ct]kT$,
where $[ct]$ denotes the least integer not smaller than $ct$. It is obvious that $\tau(t)\to\infty (t\to\infty)$. In addition, there is $\omega(\alpha,t)=\omega(\alpha,\tau(t))$ due to the periodicity. Now, using \eqref{speed4}, \eqref{speed5} and the inequality $\tilde{u}(x,t;a)\leq \omega(\alpha,t)$ yield
\[
\sup_{x\leq [ct]}\left|\tilde{u}(x,\tau(t)+[ct]kT;a)-\omega(\alpha,t)\right|=\sup_{x\leq [ct]}\left|\tilde{u}(x,\tau(t)+[ct]kT;a)-\omega(\alpha,\tau(t))\right|\to0
\]
as $t\to\infty$.
Consequently, for any $t>0$,  since $ct\leq [ct]$ and $t=\tau(t)+[ct]kT$, we then have
 \[
 \lim_{t\to\infty}\sup_{x\leq ct}|\tilde{u}(x,t;a)-\omega(\alpha,t)|=0.
 \]
 This completes the proof.
\end{proof}

\section{Convergence to a pulsating traveling front for some level sets}
\noindent

 We exhibit the convergence of solutions of \eqref{s11} around a given level set in this  section. Thus we can obtain some important properties of pulsating traveling fronts. We firstly give some properties of the solution $\tilde{u}(x,t;a)$.

\begin{lemma}\label{lem3.1} The following statements hold:

(1) $0\leq \tilde{u}(x,t;a)\leq \omega(\alpha,t)$  for any $x\in\Bbb{R}$ and $t\geq 0$.

 (2) $\tilde{u}(x,t;a-x_0)=\tilde{u}(x+x_0,t;a)$ for any $x\in\Bbb{R}$, $x_0\in\Bbb{R}$, $t\geq 0$.

 (3) $\lim_{x\to\infty}\tilde{u}(x,t;a)=0$ and $\lim_{x\to-\infty}\tilde{u}(x,t;a)=\omega(\alpha,t)$ for any $t>0$, and $\partial_x\tilde{u}(x,t;a)<0$ for any $x\in\Bbb{R}$ and $t>0$.

 (4) The map $a\longmapsto \tilde{u}(x,t;a)$ is increasing, that is, if $a_1<a_2$, then
$\tilde{u}(x,t;a_1)<\tilde{u}(x,t;a_2)$ for  all $x\in\mathbb{R}$ and $t>0$.
\end{lemma}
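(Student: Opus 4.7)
\textbf{Proof proposal for Lemma \ref{lem3.1}.} The plan is to treat the four items in order, exploiting the comparison principle, the strong maximum principle, and the translation invariance of \eqref{s11} in the spatial variable.

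For (1), I would first observe that $\tilde{u}_{0}(x)=\alpha H(a-x)$ satisfies $0\leq \tilde{u}_{0}(x)\leq\alpha=\omega(\alpha,0)$ on $\mathbb{R}$. Since $f(t,0)\equiv 0$, the constant $0$ is a solution of \eqref{s11}, and $\omega(\alpha,t)$ (viewed as spatially constant) is another solution. The comparison principle, applied after the standard initial regularization of the Heaviside initial datum, then gives $0\leq\tilde{u}(x,t;a)\leq\omega(\alpha,t)$ for all $x\in\mathbb{R}$ and $t\geq 0$. For (2), I would invoke the translation invariance of \eqref{s11} in $x$: the function $(x,t)\mapsto \tilde{u}(x+x_{0},t;a)$ solves \eqref{s11} with initial datum $\alpha H(a-x-x_{0})=\alpha H((a-x_{0})-x)$, which is exactly the initial datum defining $\tilde{u}(\cdot,\cdot;a-x_{0})$. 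Uniqueness of the Cauchy problem closes the identity.

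For (3), I would handle the limits and monotonicity separately. For the decay at $+\infty$, I reuse the supersolution $\bar u(x,t)=\min\{\omega(\alpha,t),\alpha e^{-\sqrt{K}(x-2\sqrt{K}t-a)}\}$ constructed in the proof of Lemma \ref{le1speed}(1); the comparison principle gives $0\leq\tilde{u}(x,t;a)\leq \bar u(x,t)\to 0$ as $x\to\infty$ for each fixed $t>0$. For the limit at $-\infty$, I set $v(x,t):=\omega(\alpha,t)-\tilde{u}(x,t;a)$. A direct computation shows that $v_{t}=v_{xx}+g(t,v)$ with $g(t,v):=f(t,\omega(\alpha,t))-f(t,\omega(\alpha,t)-v)$, $g(t,0)\equiv 0$, and $v(x,0)=\alpha\bigl(1-H(a-x)\bigr)$ which is the reflected Heaviside datum; the same supersolution construction (reflected in $x$) then yields $v(x,t)\to 0$ as $x\to-\infty$ for each $t>0$. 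For strict monotonicity, I would consider $w_{h}(x,t):=\tilde{u}(x,t;a)-\tilde{u}(x+h,t;a)$ for $h>0$. Since $w_{h}(x,0)\geq 0$ with strict inequality on a nontrivial interval, and $w_{h}$ satisfies the linear equation $\partial_{t}w_{h}=\partial_{xx}w_{h}+c(x,t)w_{h}$ with $c$ given by a mean-value expression of $f_{u}$, the strong maximum principle gives $w_{h}(x,t)>0$ for every $x\in\mathbb{R}$, $t>0$. Dividing by $h$, letting $h\downarrow 0$, and applying the strong maximum principle once more to $-\partial_{x}\tilde{u}$, which itself solves a linear parabolic equation with bounded coefficients, yields $\partial_{x}\tilde{u}(x,t;a)<0$ for every $x\in\mathbb{R}$, $t>0$.

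Item (4) is then immediate by the same comparison/strong-maximum-principle scheme: if $a_{1}<a_{2}$, then $\tilde{u}(x,0;a_{1})\leq\tilde{u}(x,0;a_{2})$ with strict inequality on $[a_{1},a_{2})$, so the difference satisfies a linear parabolic inequality with nontrivial nonnegative initial datum and is therefore everywhere positive for $t>0$. The only mildly delicate point is the use of comparison with a Heaviside initial condition, which is standard but requires either mollifying $u_{0}$ and passing to the limit or invoking the $L^{1}_{\mathrm{loc}}$ version of the maximum principle; this will be the main technical step, but it is routine for the semilinear parabolic equation \eqref{s11}.
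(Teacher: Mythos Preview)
Your proposal is correct and follows the standard route (comparison with the spatially constant solutions $0$ and $\omega(\alpha,t)$, translation invariance, exponential super\-solutions for the limits, and the strong maximum principle applied to $\partial_x\tilde u$ and to differences). The paper itself omits the proof entirely, stating only that it is easy, so there is no alternative argument to compare against; your write-up is exactly the kind of verification the authors had in mind.
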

The proof of the lemma is easy and we omit it. Following  this lemma, we have that, for any given $k\in\mathbb{N}$ and $\lambda\in(0,\alpha)$, there exists a unique $\ell(k,\lambda)\in\Bbb{R}$ such that $\tilde{u}(\ell(k,\lambda),kT;0)=\lambda$. Since the solution is  shift invariant, we then  have $\tilde{u}(0,kT;-\ell(k,\lambda))=\lambda$. It follows from Lemma \ref{lem3.1} (ii) that $-\ell(k,\lambda)$ is the unique root of the equation $\tilde{u}(0,kT;a)=\lambda$ on the variable $a$. Thus, we can give the following definition.
\begin{definition}\label{def1convergence}
For $k\in\mathbb{N}$ and $\lambda\in(0,\alpha)$, we define $\ell(k,\lambda)\in\mathbb{R}$ by
\[
\tilde{u}(0,kT;-\ell(k,\lambda))=\lambda.
\]
\end{definition}
It is clear that, for any $k\in\mathbb{N}$ and $\lambda\in(0,\alpha)$, $\ell(k,\lambda)$ is uniquely determined. In particular, there holds $\tilde{u}(-a,kT;a-\ell(k,\lambda))=\lambda$ for any $a\in\Bbb{R}$. By Lemma \ref{le1speed}, we can  easily prove an important property of  $\ell(k,\lambda)$.
\begin{proposition}\label{pro}
$\ell(k,\lambda)\to\infty$ as $k\to\infty$. In particular, there is
\begin{equation}\label{converge5}
c_{\ast}T\leq\liminf_{j\to\infty}\frac{\ell(j,\lambda)}{j}\leq
\limsup_{j\to\infty}\frac{\ell(j,\lambda)}{j}\leq c^{\ast}T.
\end{equation}
\end{proposition}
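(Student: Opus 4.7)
The plan is to interpret $\ell(k,\lambda)$ geometrically via the shift invariance in Lemma~\ref{lem3.1}(2) and then read off both bounds directly from the spreading speed estimates in Lemma~\ref{le1speed}. By Lemma~\ref{lem3.1}(2), the defining identity $\tilde{u}(0,kT;-\ell(k,\lambda))=\lambda$ is equivalent to
\[
\tilde{u}(\ell(k,\lambda),kT;0)=\lambda,
\]
so $\ell(k,\lambda)$ is the unique level-$\lambda$ point of the profile $\tilde{u}(\cdot,kT;0)$; uniqueness and well-definedness come from Lemma~\ref{lem3.1}(3), which gives strict spatial monotonicity together with the correct limits at $\pm\infty$.

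For the upper bound, I would fix an arbitrary $c>c^{\ast}$ and apply Lemma~\ref{le1speed}(1) with $a=0$, which yields $\sup_{x\geq cjT}|\tilde{u}(x,jT;0)|\to 0$ as $j\to\infty$. In particular, once $j$ is large enough that this supremum is below $\lambda$, the level-$\lambda$ point must lie to the left of $cjT$, i.e.\ $\ell(j,\lambda)<cjT$. Hence $\limsup_{j\to\infty}\ell(j,\lambda)/j\leq cT$, and letting $c\downarrow c^{\ast}$ gives the right-hand inequality in \eqref{converge5}.

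For the lower bound, I would fix any $c\in(0,c_{\ast})$ and apply Lemma~\ref{le1speed}(2), obtaining $\sup_{x\leq cjT}|\tilde{u}(x,jT;0)-\omega(\alpha,jT)|\to 0$. Because $jT$ is an integer multiple of the period, $\omega(\alpha,jT)=\alpha>\lambda$, so for all sufficiently large $j$ one has $\tilde{u}(cjT,jT;0)>\lambda$; by the strict decrease in $x$ this forces $\ell(j,\lambda)>cjT$. Therefore $\liminf_{j\to\infty}\ell(j,\lambda)/j\geq cT$, and letting $c\uparrow c_{\ast}$ delivers the left-hand inequality. The asymptotic $\ell(k,\lambda)\to\infty$ is then immediate because $c_{\ast}T>0$.

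I do not foresee a serious obstacle here: the argument is essentially a translation of the spreading dichotomy into a statement about the level-$\lambda$ curve. The only subtlety worth flagging is checking that $\omega(\alpha,jT)=\alpha$ exceeds $\lambda$, which is why the comparison with the sub-speed $c_{\ast}$ actually pushes $\ell(j,\lambda)$ to the right; this uses the standing assumption $\lambda\in(0,\alpha)$ together with $T$-periodicity of $\omega(\alpha,\cdot)$.
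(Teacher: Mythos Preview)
Your proposal is correct and follows exactly the approach the paper intends: the paper does not spell out a proof but simply remarks that the proposition follows easily from Lemma~\ref{le1speed}, and your argument is precisely the natural way to extract the two inequalities in \eqref{converge5} from parts (1) and (2) of that lemma together with the strict monotonicity in Lemma~\ref{lem3.1}(3). There is nothing to add.
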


The following lemma shows the convergence of the solutions of  \eqref{s11} with some shifted Heaviside type initial value.
\begin{lemma}\label{le1convergence}
Let $\lambda\in(0,\alpha)$. Then there exists the limit
\begin{equation}\label{converge1}
\lim_{j\to\infty}\tilde{u}(\cdot,\cdot+jT;-\ell(j,\lambda)):=u_{\infty}(\cdot,\cdot;\lambda)\quad {\rm in}\ C^{2,1}_{loc}(\Bbb{R}^2),
\end{equation}
where the limit function $u_{\infty}(\cdot,\cdot;\lambda)$ satisfies:

(1) $0<u_{\infty}(x,t;\lambda)<\omega (\alpha,t)$ for all $(x,t)\in\Bbb{R}^2$;

(2) $u_{\infty}$ is an entire solution of \eqref{s11} and steeper than any other entire solution;

(3) $u_{\infty}$ is either a $T$-periodic solution of \eqref{s11} or decreasing with respect to $x$, that is $\partial_{x}u_{\infty}(x,t;\lambda)<0$ for any $(x,t)\in\mathbb{R}^{2}$.
\end{lemma}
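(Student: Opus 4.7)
The plan is to extract a $C^{2,1}_{\text{loc}}$ subsequential limit by parabolic regularity, pin it down uniquely via the ``steeper'' property, and then analyze its spatial monotonicity using the strong maximum principle, handling the spatially-constant sub-case with a separate level-set argument.

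First I would rewrite $\tilde u(x, t+jT; -\ell(j,\lambda)) = \tilde u(x + \ell(j,\lambda), t + jT; 0) =: w_j(x,t)$ via Lemma \ref{lem3.1}(2). Since $0 \le w_j \le \omega(\alpha,t)$ and the evaluation times $t + jT$ escape to $+\infty$, interior parabolic Schauder estimates furnish a $C^{2,1}_{\text{loc}}$-convergent subsequence whose limit $u_\infty$ is an entire solution of \eqref{s11} with $u_\infty(0,0) = \lambda$. Since $w_j$ fits the $\omega$-limit-orbit construction of Definition \ref{deflimits} applied to $\tilde u(\cdot,\cdot;0)$, Lemma \ref{lem-limitset} implies $u_\infty$ is steeper than every entire solution lying between $0$ and $\omega(\alpha,t)$, which is (2). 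For (1), the strong maximum principle applied to $u_\infty$ (linearized using $f(t,0)\equiv 0$) and to $\omega(\alpha,t)-u_\infty$, combined with forward uniqueness of the Cauchy problem and the interior datum $u_\infty(0,0) = \lambda \in (0,\alpha)$, rules out touching either bound and gives the strict inequalities.

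For uniqueness of the limit (and hence full-sequence convergence), if $u_\infty^{(1)}$ and $u_\infty^{(2)}$ are two subsequential limits, then by (2) each is steeper than every other such entire solution, in particular each is steeper than the other. Since both equal $\lambda$ at $(0,0)$, applying Definition \ref{steeper} with $t'=0$, $x_1=0$, $k=0$ in both directions yields either $u_\infty^{(1)}\equiv u_\infty^{(2)}$ or the contradictory pair $\partial_x u_\infty^{(1)}(0,0) < \partial_x u_\infty^{(2)}(0,0)$ and $\partial_x u_\infty^{(2)}(0,0) < \partial_x u_\infty^{(1)}(0,0)$. Hence the limit is unique and the whole sequence converges.

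For (3), each $w_j$ is strictly decreasing in $x$ by Lemma \ref{lem3.1}(3), so $\partial_x u_\infty \le 0$ everywhere. Applying the strong maximum principle to $\partial_x u_\infty$, which satisfies the linearized equation, yields the dichotomy: either $\partial_x u_\infty < 0$ everywhere, or $u_\infty(x,t) \equiv v(t)$ for some $v$ solving \eqref{s13} with $v(0) = \lambda$. The main obstacle is the second alternative, where I must show $v(T) = \lambda$, which by uniqueness for \eqref{s13} gives $v(t+T) \equiv v(t)$. I would argue by contradiction: suppose $v(T) > \lambda$ (the reverse case is symmetric). By $C^{2,1}_{\text{loc}}$-convergence, for any $R>0$ there exists $J(R)$ so that $w_j(x,T) > \lambda$ for all $|x| \le R$ and $j \ge J(R)$. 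But $w_j(\cdot,T)$ is strictly decreasing with $w_j(-\infty,T) = \omega(\alpha,T) = \alpha > \lambda$ and $w_j(+\infty,T) = 0$, and by the defining relation $\tilde u(\ell(j+1,\lambda),(j+1)T;0) = \lambda$ its unique level-$\lambda$ crossing sits exactly at $x = \ell(j+1,\lambda) - \ell(j,\lambda)$. Therefore $\ell(j+1,\lambda) - \ell(j,\lambda) > R$ for all $j \ge J(R)$; summing in $j$ forces $\liminf_{j}\ell(j,\lambda)/j \ge R$ for every $R>0$, contradicting the upper bound $\limsup_{j}\ell(j,\lambda)/j \le c^{\ast}T$ from Proposition \ref{pro}. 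Thus $v(T) = \lambda$, completing the dichotomy of (3).
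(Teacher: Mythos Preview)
Your proof is correct and follows the paper's approach almost exactly: parabolic estimates give compactness, Lemma~\ref{lem-limitset} supplies the steeper property, mutual steepness together with the common value $\lambda$ at $(0,0)$ forces uniqueness of the subsequential limit, and the strong maximum principle applied to $\partial_x u_\infty$ yields the dichotomy in~(3).

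The one genuine difference is in the spatially constant branch of~(3). The paper's proof of this lemma stops at ``$u_\infty$ is a solution independent of the spatial variable'' and does not verify $T$-periodicity here; that is only established in the next lemma (Lemma~\ref{le2convergence}) via a case analysis on the increments $\ell_j=\ell(j,\lambda)-\ell(j-1,\lambda)$. Your level-set argument is a self-contained alternative: the $\lambda$-level of $w_j(\cdot,T)$ sits precisely at $\ell(j{+}1,\lambda)-\ell(j,\lambda)$, so if $v(T)\neq\lambda$ these increments are eventually uniformly large in one direction or the other, contradicting the two-sided bounds $c_\ast T\le\liminf_j \ell(j,\lambda)/j\le\limsup_j \ell(j,\lambda)/j\le c^\ast T$ from Proposition~\ref{pro}. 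This is a valid and slightly more direct route than the paper's, which instead obtains periodicity as a byproduct of the structural analysis of $\{\ell_j\}$ needed for the pulsating-front alternative.
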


\begin{proof}  By the standard parabolic estimates, there exists $M>0$ such that
\[
\|\tilde{u}(\cdot,\cdot;a)\|_{C^{2+\theta,1+\frac{\theta}{2}}}\left(\Bbb{R}\times [T,+\infty)\right)\leq M,
\]
 where $M$ is independent of $a\in\Bbb{R}$. Therefore, for the sequence $\{\tilde{u}(x,t+jT;-\ell(j,\lambda))\}_{j\in\Bbb{N}}$,  there exists a subsequence $\{j_{n}\}_{n\in\mathbb{N}}$ with $j_{n}\to\infty\, (n\to\infty)$ such that
\[
\lim_{n\to\infty}\tilde{u}(x,t+j_{n}T;-\ell(j_{n},\lambda))=u_{\infty}(x,t;\lambda) \quad {\rm in}\ C^{2,1}_{loc}(\Bbb{R}^2).
\]
Since $\tilde{u}(x,t+j_{n}T;-\ell(j_{n},\lambda))=\tilde{u}(x+\ell(j_{n},\lambda),t+j_{n}T;0)$, then $u_{\infty}(x,t;\lambda)$ is an $\omega$-limit orbit of $\tilde{u}(x,t;0)$. Moreover, $u_{\infty}(0,0;\lambda)=\lambda$. Since $0\leq u_{\infty}(x,t;\lambda)\leq \omega (\alpha,t)$ for any $(x,t)\in\Bbb{R}^2$, by the strong maximum principle we have $0< u_{\infty}(x,t;\lambda)<\omega (\alpha,t)$ for any $(x,t)\in\Bbb{R}^2$.

For any other sequence $\{k_{n}\}_{n\in\mathbb{N}}$ with $k_n\to \infty$ as $n\to\infty$, assume that
\[
\lim_{n\to\infty}\tilde{u}(x,t+k_{n}T;-\ell(k_{n},\lambda))=v_{\infty}(x,t;\lambda) \quad {\rm in}\ C^{2,1}_{loc}(\Bbb{R}^2).
\]
Similarly, $v_{\infty}(x,t;\lambda)$ is an $\omega$-limit orbit of $\tilde{u}(x,t;0)$ and satisfies  $v_{\infty}(0,0;\lambda)=\lambda$. By Lemma \ref{lem-limitset}, $u_{\infty}(x,t;\lambda)$ and $v_{\infty}(x,t;\lambda)$ are steeper than each other. Due to $u_{\infty}(0,0;\lambda)=v_{\infty}(0,0;\lambda)=\lambda$ and Definition \ref{steeper}, we have $u_{\infty}(x,t;\lambda)\equiv v_{\infty}(x,t;\lambda)$ in $\Bbb{R}^2$, which implies that $u_{\infty}(x,t;\lambda)$ does not depend on the choice of sequence $\{j_{n}\}_{n\in\mathbb{N}}$. Thus, we complete the proofs of \eqref{converge1} and the statement (1).

It follows from Lemma \ref{lem3.1} (2) that $\partial_{x}u_{\infty}(x,t)\leq0$ in $\Bbb{R}^2$. Then by the strong maximum principle,  we have either $\partial_{x}u_{\infty}(x,t;\lambda)\equiv0$ in $\Bbb{R}^2$ or $\partial_{x}u_{\infty}(x,t;\lambda)<0$ for any $(x,t)\in\Bbb{R}^2$. If $\partial_{x}u_{\infty}(x,t;\lambda)\equiv0$ in $\Bbb{R}^2$, then $u_{\infty}(x,t;\lambda)\equiv u_{\infty}(t;\lambda)$, which implies that $u_{\infty}(x,t;\lambda)$ is a solution independent of spatial variable. This completes the proof of the statement (2).
\end{proof}

The following part shows that the above limit $u_{\infty}$ is either a pulsating traveling front or a $T$-periodic solution of \eqref{s11}. Let us define the sequence
\[
\ell_{j}=
\begin{cases}
\ell(1,\lambda)\qquad \quad\quad\quad \quad\quad\quad\quad  j=1,\\
\ell(j,\lambda)-\ell(j-1,\lambda)\,\qquad\quad \,\,j\geq2.
\end{cases}
\]
Clearly, $\ell(j,\lambda)=\sum^{j}_{i=1}\ell_{i}$  for all $j\in\mathbb{N}$.

\begin{lemma}\label{le2convergence}
For any $\lambda\in(0,\alpha)$, the entire solution
\[u_{\infty}(x,t;\lambda)=\lim_{j\to\infty}\tilde{u}(x,t+jT;-\ell(j,\lambda))\]
of \eqref{s11}, which is defined by Lemma \rm{\ref{le1convergence}}, is either a positive $T$-periodic solution or a pulsating traveling front.
\end{lemma}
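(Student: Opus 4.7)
By Lemma~\ref{le1convergence}(3), $u_\infty$ is either independent of $x$ or strictly decreasing in $x$; my plan is to treat both alternatives by identifying $u_\infty(\cdot,\cdot+T;\lambda)$ via the shift invariance of Lemma~\ref{lem3.1}(2) combined with the steepness-uniqueness already used to prove Lemma~\ref{le1convergence}.

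The first step is to recognise $v(x,t):=u_\infty(x,t+T;\lambda)$ as an $\omega$-limit orbit of $\tilde u(\cdot,\cdot;0)$. Indeed, Lemma~\ref{lem3.1}(2) yields
\[
\tilde u\bigl(x,\,t+jT;\,-\ell(j-1,\lambda)\bigr) = \tilde u\bigl(x+\ell(j-1,\lambda),\,t+jT;\,0\bigr),
\]
and sending $j\to\infty$ the left side converges to $v(x,t)$ in $C^{2,1}_{\rm loc}$; since $\ell(j-1,\lambda)\to\infty$ by Proposition~\ref{pro}, the right side displays $v$ as an $\omega$-limit orbit in the sense of Definition~\ref{deflimits}. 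Setting $\mu:=v(0,0)=u_\infty(0,T;\lambda)\in(0,\omega(\alpha,T))$, both $v$ and the $\omega$-limit orbit $u_\infty(\cdot,\cdot;\mu)$ produced by Lemma~\ref{le1convergence} pass through the value $\mu$ at $(0,0)$. By Lemma~\ref{lem-limitset} each is steeper than every bounded entire solution, hence steeper than the other, and Definition~\ref{steeper} then forces $v\equiv u_\infty(\cdot,\cdot;\mu)$ on $\mathbb{R}^2$.

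I then split according to the dichotomy. In the strictly decreasing case, for this $\mu$ there is a unique $x_\mu\in\mathbb{R}$ with $u_\infty(x_\mu,0;\lambda)=\mu$; the spatial translate $u_\infty(\cdot+x_\mu,\cdot;\lambda)$ is again an $\omega$-limit orbit taking value $\mu$ at $(0,0)$, so the same uniqueness argument gives $u_\infty(\cdot,\cdot;\mu)\equiv u_\infty(\cdot+x_\mu,\cdot;\lambda)$. Combining with the identification of $v$ yields the pulsating relation
\[
u_\infty(x,\,t+T;\,\lambda)=u_\infty(x-L,\,t;\,\lambda),\qquad L:=-x_\mu,
\]
on $\mathbb{R}^2$, and strict monotonicity makes $L$ unique; the monotone limits $u_\infty(\pm\infty,t;\lambda)$ then exist, and, by the pulsating relation together with standard parabolic estimates, are $T$-periodic solutions of \eqref{s13}, so $u_\infty$ is a pulsating traveling front with speed $L/T$ in the sense of Definition~\ref{def2convergence}. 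In the $x$-independent case, $u_\infty(x,t;\lambda)=\omega(\lambda,t)$ and $\mu=P(\lambda)$; the identity $v\equiv u_\infty(\cdot,\cdot;\mu)$ forces $u_\infty(\cdot,\cdot;\mu)$ also to be $x$-independent, equal to $\omega(\mu,t)$, and iteration shows every $P^n(\lambda)$ is in this case. A monotonicity argument on the bounded monotone sequence $\{P^n(\lambda)\}$, together with steepness uniqueness at the limit, will force $P(\lambda)=\lambda$, so that $\omega(\lambda,\cdot)$ is a positive $T$-periodic solution of \eqref{s11}.

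The main obstacle is the $x$-independent case, specifically pinning down $P(\lambda)=\lambda$: the steepness identity $v\equiv u_\infty(\cdot,\cdot;\mu)$ degenerates into the tautology $\omega(\lambda,t+T)\equiv\omega(P(\lambda),t)$ via the ODE semigroup, so extra information is needed. I plan to close this by analysing the sequence $\{\ell_j\}$ directly in the $x$-independent case: if $P(\lambda)\neq\lambda$, then the level sets $\{u_j(\cdot,T)=\lambda\}$ must drift to $\pm\infty$, forcing $\ell_{j+1}\to\pm\infty$; the resulting shifted sequence, combined with the spreading estimates of Lemma~\ref{le1speed} and Proposition~\ref{pro}, should produce a second $\omega$-limit orbit through $\lambda$ at $(0,0)$ that is strictly decreasing in $x$, contradicting the uniqueness established in Lemma~\ref{le1convergence}.
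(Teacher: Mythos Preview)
Your strategy---identifying $u_\infty(\cdot,\cdot+T;\lambda)$ with the $\omega$-limit orbit $u_\infty(\cdot,\cdot;\mu)$ via steepness-uniqueness, and then in the monotone case matching it with a spatial translate of $u_\infty(\cdot,\cdot;\lambda)$---is elegant but has a real gap. In the strictly decreasing case you assert ``there is a unique $x_\mu\in\mathbb{R}$ with $u_\infty(x_\mu,0;\lambda)=\mu$''; this requires $\mu=u_\infty(0,T;\lambda)$ to lie in the open range $(p_+(0),p_-(0))$ of $u_\infty(\cdot,0;\lambda)$, where $p_\pm(t)=\lim_{x\to\pm\infty}u_\infty(x,t;\lambda)$. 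Nothing so far excludes, say, $\mu\ge p_-(0)$: the limits $p_\pm$ solve the ODE \eqref{s13} but are not yet known to be $T$-periodic, and indeed their $T$-periodicity is essentially what you are trying to prove. Mutual steepness of the $\omega$-limit orbits $u_\infty(\cdot+y,\cdot;\lambda)$ and $u_\infty(\cdot,\cdot+T;\lambda)$ only yields information when their graphs meet, so if the ranges at $t=0$ and $t=T$ happen to be disjoint your argument produces nothing. Ruling out this ``range-drift'' scenario seems to require precisely the kind of increment analysis you hoped to bypass.

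The paper proceeds instead by analysing $\ell_j=\ell(j,\lambda)-\ell(j-1,\lambda)$ directly. If some subsequence $\ell_{j_k}\to L>0$, then the very shift identity you used in your first step (namely $\tilde u(x,t+jT;-\ell(j-1,\lambda))=\tilde u(x-\ell_j,t+jT;-\ell(j,\lambda))$) gives $u_\infty(x-L,t;\lambda)=u_\infty(x,t+T;\lambda)$ in one line, with no range hypothesis; the pulsating relation then \emph{implies} the $T$-periodicity of $p_\pm$. If no positive subsequential limit exists, the spreading bounds of Proposition~\ref{pro} force $\limsup\ell_j=+\infty$ and $\liminf\ell_j\le 0$, and sandwiching $u_\infty(\cdot,\cdot+T;\lambda)$ between far-left and far-right spatial translates of $u_\infty(\cdot,\cdot;\lambda)$ forces $x$-independence and $T$-periodicity simultaneously. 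Notice that your own proposed rescue of the $x$-independent case---``analysing the sequence $\{\ell_j\}$ directly''---is exactly the paper's method; once you commit to that machinery, the strictly decreasing case also falls out more cleanly by the same route than by the steepness detour.
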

\begin{proof}
We divide the proof into two cases.

(1) If there exists a subsequence $\{\ell_{j_k}\}_{k\in\mathbb{N} } $ such that $\ell_{j_k}\to L (L >0)$ as $k\to\infty$, then for any $(x,t)\in\Bbb{R}$, one has
\begin{equation*}
\begin{aligned}
u_{\infty}(x-L,t;\lambda)
=&\lim_{k\to\infty}\tilde{u}(x-\ell_{j_k},t+j_kT;-\ell(j_k,\lambda))\\
=&\lim_{k\to\infty}\tilde{u}(x-(\ell(j_k,\lambda)-\ell(j_k-1,\lambda)),t+j_kT;-\ell(j_k,\lambda))\\
=&\lim_{k\to\infty}\tilde{u}(x,(t+T)+(j_k-1)T;-\ell(j_k-1,\lambda))\\
=&u_{\infty}(x,t+T;\lambda)
\end{aligned}
\end{equation*}
that is
\begin{equation}\label{converge3}
u_{\infty}(x-L,t;\lambda)=u_{\infty}(x,t+T;\lambda)\quad {\rm for\ any}\  (x,t)\in\Bbb{R}^2.
\end{equation}
According to Lemma \ref{le1convergence}, either $u_{\infty}(x,t;\lambda):=u_{\infty}(t;\lambda)$ is independent of $x\in\Bbb{R}$ or $\partial_xu_{\infty}(x,t;\lambda)<0$ for any $(x,t)\in\Bbb{R}$. If $u_{\infty}(x,t;\lambda):=u_{\infty}(t;\lambda)$ is independent of $x\in\Bbb{R}$, then it follows from \eqref{converge3} that $u_{\infty}(t;\lambda)=u_{\infty}(t+T;\lambda)$ for any $t\in\Bbb{R}$, which implies that $u_{\infty}(x,t;\lambda):=u_{\infty}(t;\lambda)$ is a positive $T$-periodic solution. If $\partial_xu_{\infty}(x,t;\lambda)<0$ for any $(x,t)\in\Bbb{R}$,  we have $u_{\infty}(x,t;\lambda)\to\omega_{\pm}(t)$ as $x\to\pm\infty$ and both $\omega_{+}(t)$ and $\omega_{-}(t)$ are positive $T$-periodic solutions, which implies that $u_{\infty}(x,t;\lambda) $ is a pulsating traveling front.

(2) We assume that no subsequence of $\{\ell_{j}\}_{j\in\Bbb{N}}$ converges to some positive constant. In this case we can show that $u_{\infty}$ must be a $T$-periodic solution of \eqref{s11}. Due to \eqref{converge5}, we have that there exist two subsequences $\{\ell_{j_{-,k}}\}_{k\in\Bbb{N}}$ and $\{\ell_{j_{+,k}}\}_{k\in\Bbb{N}}$ of $\{\ell_{j}\}_{j\in\Bbb{N}}$ such that
\[
\lim_{k\to\infty}\ell_{j_{+,k}}=\limsup_{j\to\infty}\ell_{j}=+\infty,\quad \lim_{k\to\infty}\ell_{j_{-,k}}=\liminf_{j\to\infty}\ell_{j}=L^-\in [-\infty,0].
\]
Now we consider two cases:

 {\bf Case 1}. $L^-\in [-\infty,0)$. In view of Lemma \ref{lem3.1} (3), for any $M\in (L^-,0)$, one has
\begin{equation*}
\begin{aligned}
&u_{\infty}(x-M,t;\lambda)\\
=&\lim_{k\to\infty}\tilde{u}(x-M,t+j_{-,k}T;-\ell(j_{-,k},\lambda))\\
\geq &\lim_{k\to\infty}\tilde{u}(x-\ell_{j_{-,k}},t+j_{-,k}T;-\ell(j_{-,k},\lambda))\\
=&\lim_{k\to\infty}\tilde{u}(x-(\ell(j_{-,k},\lambda)-\ell(j_{-,k}-1,\lambda)),t+j_{-,k}T;-\ell(j_{-,k},\lambda))\\
=&\lim_{k\to\infty}\tilde{u}(x,(t+T)+(j_{-,k}-1)T;-\ell(j_{-,k}-1,\lambda))\\
=&u_{\infty}(x,t+T;\lambda)
\end{aligned}
\end{equation*}
and
\begin{equation*}
\begin{aligned}
&u_{\infty}(x+M,t;\lambda)\\
=&\lim_{k\to\infty}\tilde{u}(x+M,t+j_{+,k}T;-\ell(j_{+,k},\lambda))\\
\leq &\lim_{k\to\infty}\tilde{u}(x-\ell_{j_{+,k}},t+j_{+,k}T;-\ell(j_{+,k},\lambda))\\
=&\lim_{k\to\infty}\tilde{u}(x-(\ell(j_{+,k},\lambda)-\ell(j_{+,k}-1,\lambda)),t+j_{+,k}T;-\ell(j_{+,k},\lambda))\\
=&\lim_{k\to\infty}\tilde{u}(x,(t+T)+(j_{+,k}-1)T;-\ell(j_{+,k}-1,\lambda))\\
=&u_{\infty}(x,t+T;\lambda)
\end{aligned}
\end{equation*}
for any $(x,t)\in\Bbb{R}^2$, which implies that
\begin{equation}\label{converge10}
 u_{\infty}(x-M,t;\lambda)\geq u_{\infty}(x,t+T;\lambda)\geq u_{\infty}(x+M,t;\lambda)\quad {\rm for\ any}\ (x,t)\in\Bbb{R}^2\ {\rm and}\ M\in (L^-,0).
\end{equation}
On the other hand, it follows from $\partial_x u_{\infty}(x,t;\lambda)\leq 0$ that
\begin{equation}\label{converge11}
 u_{\infty}(x-M,t;\lambda)\leq  u_{\infty}(x+M,t;\lambda)\quad {\rm for\ any}\ (x,t)\in\Bbb{R}^2\ {\rm and}\ M\in (L^-,0).
\end{equation}
Consequently, it follows from \eqref{converge10}, \eqref{converge11} and  the arbitrariness of $(x,t)\in\Bbb{R}^2$ and $M\in (L^-,0)$ that $u_{\infty}(x,t;\lambda)=u_{\infty}(t;\lambda)$ and $u_{\infty}(t;\lambda)=u_{\infty}(t+T;\lambda)$. That is, $u_{\infty}(x,t;\lambda)=u_{\infty}(t;\lambda)$ is a $T$-periodic solution of \eqref{s11}.

 {\bf Case 2}. $L^-=0$. Consider the the subsequences $\{\ell_{j_{-,k}}\}_{k\in\Bbb{N}}$ and $\{\ell_{j_{+,k}}\}_{k\in\Bbb{N}}$ respectively. Using the same computations as those in (1) and Case 1, we get
\begin{equation}\label{converge6}
u_{\infty}(x,t;\lambda)=u_{\infty}(x,t+T;\lambda)\quad {\rm for\ any}\ (x,t)\in\Bbb{R}^2
\end{equation}
and
\begin{equation}\label{converge12}
  u_{\infty}(x,t+T;\lambda)\geq u_{\infty}(x+M,t;\lambda)\quad {\rm for\ any}\ (x,t)\in\Bbb{R}^2\ {\rm and}\ M<0.
\end{equation}
By \eqref{converge6}, \eqref{converge12} and the fact $\partial_xu_{\infty}(x,t;\lambda)\leq 0$ for any $(x,t)\in\Bbb{R}$, we have  that $u_{\infty}(x,t;\lambda):=u_{\infty}(t;\lambda)$ is independent of $x\in\Bbb{R}$ and $u_{\infty}(x,t;\lambda):=u_{\infty}(t;\lambda)$ is a $T$-periodic solution.
This completes the proof.
\end{proof}

\begin{remark}\label{re2convergence}
{\rm By Lemma \ref{le2convergence} and its proof, we have that if $u_\infty(x,t;\lambda)$ is a pulsating traveling front of \eqref{s11}, then there exists a $L>0$ such that $\ell_j\to L$ as $j\to\infty$ and
\[
 u_\infty(x-L,t;\lambda)=u_\infty(x,t+T;\lambda)\quad {\rm for\ any}\ (x,t)\in\Bbb{R}^2.
\]
$c=\frac{L}{T}$ is the speed of the pulsating traveling front. By the hypothesis  (H1), a $T$-periodic solution $\omega(\alpha,t)$ is isolated from below. If $\lambda \in (0,\alpha)$ is close enough to $\alpha$, then $u_\infty(x,t;\lambda)$ can not be a $T$-periodic solution. In fact, $u_\infty(x,t;\lambda)$ must be a pulsating traveling front of \eqref{s11} connecting the $T$-periodic solution $\omega_0(t):=\omega(\alpha,t)$ to another $T$-periodic solution $\omega_1(t)<\omega_0(t)$ with a positive speed $c_1$.  }
\end{remark}

\section{Convergence of solution to a propagating terrace}
 In this  section, we prove the convergence of the solutions to a propagating terrace. In Subsection 4.1,  we construct a minimal propagating terrace  in the sense of Definition \ref{def1terrace}. In Subsection 4.2, we investigate the convergence of the solutions to the propagating terrace.

\subsection{Existence of a minimal propagating terrace}
In this subsection we prove that there exists  a minimal propagating terrace of \eqref{s11} which is unique, namely, we prove Theorem \ref{th1terrace}. The method is to use  iterative arguments. As mentioned in Remark \ref{re2convergence}, here we first choose a $\lambda_1\in (0,\alpha)$ close enough to $\alpha$ to get a pulsating traveling front $U_1(x,t):=u_\infty(x,t;\lambda_1)$ of \eqref{s11} connecting the $T$-periodic solution $\omega_0(t):=\omega(\alpha,t)$ to another $T$-periodic solution $\omega_1(t)<\omega_0(t)$ with a positive speed $c_1$, which is  the first step of such a terrace. Then we have the following lemma.

\begin{lemma}\label{le1terrace}
For $0<\lambda_{k}<\alpha$, if  $U_{k}(x,t):=u_{\infty}(x,t;\lambda_{k})$ is a pulsating traveling front connecting $T$-periodic solutions $\omega_{k-1}(t)$ and $\omega_{k}(t)<\omega_{k-1}(t)$, then one has

(1) $\omega_{k}(t)$ is isolated and stable from below with respect to \eqref{s13};

(2) there exists $\lambda_{k+1}\in(0,\omega_{k}(0))$ such that $U_{k+1}:=u_\infty(x,t;\lambda_{k+1})$ is a pulsating traveling front  connecting $\omega_{k}(t)$ to some $T$-periodic solution $\omega_{k+1}(t)<\omega_{k}(t)$.
\end{lemma}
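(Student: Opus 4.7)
The plan is to prove the two claims in sequence, with part (1) furnishing the isolation needed to construct the next step in part (2). Throughout I would rely on the fact (Lemma \ref{le1convergence}(2)) that $U_k=u_{\infty}(\cdot,\cdot;\lambda_k)$ is steeper than every entire solution of \eqref{s11} valued in $[0,\omega(\alpha,t)]$; in particular, $U_k$ is steeper than every $T$-periodic solution $\tilde{\omega}$ of \eqref{s13} regarded as a spatially constant entire solution of \eqref{s11}.

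\textbf{Part (1).} I would argue by contradiction, treating isolation and stability separately. For isolation from below, suppose there is a sequence $\tilde{\omega}_n(t)\nearrow\omega_k(t)$ of $T$-periodic solutions strictly below $\omega_k$. Since $U_k(\cdot,t)\to\omega_k(t)$ uniformly in $t$ as $x\to+\infty$, I can pick $x_n\to+\infty$ with $U_k(x_n,0)\in(\tilde{\omega}_n(0),\omega_k(0))$, so that $U_k(\cdot,0)-\tilde{\omega}_n(0)$ already has a zero for each large $n$. Tracking $Z[U_k(\cdot,t)-\tilde{\omega}_n(t)]$ forward in time via Lemma \ref{lem-zn1} and using the pulsating identity $U_k(x-L,t)=U_k(x,t+T)$ forces the zeros to persist and accumulate as $n\to\infty$, contradicting the pointwise limit $\tilde{\omega}_n\to\omega_k$ together with the steepness of $U_k$ relative to $\omega_k$. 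For stability from below, assuming $\omega_k$ is isolated but unstable, hypothesis (H2) supplies $T$-periodic supersolutions $\bar{\phi}_n(t)<\omega_k(t)$ with $\bar{\phi}_n\nearrow\omega_k$. Each $\bar{\phi}_n$, viewed as a spatially constant supersolution of \eqref{s11}, caps the ODE trajectory from $\bar{\phi}_n(0)$ from above, and Dancer--Hess then produces an entire solution $W_n(t)$ of \eqref{s13} which is strictly below $\bar{\phi}_n$ and connects a lower $T$-periodic orbit to $\bar{\phi}_n$. Regarding $W_n$ as an entire solution of \eqref{s11} and comparing with $U_k$ via steepness and the zero-number, the combined monotonicity in $t$ of $W_n$ and the right-limit $U_k(+\infty,\cdot)=\omega_k$ are incompatible, yielding the contradiction.

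\textbf{Part (2).} Using the isolation established in (1), fix $\lambda_{k+1}\in(0,\omega_k(0))$ sufficiently close to $\omega_k(0)$ that no other fixed point of $P$ lies in $[\lambda_{k+1},\omega_k(0))$. Define $U_{k+1}(x,t):=u_{\infty}(x,t;\lambda_{k+1})$ through Lemma \ref{le1convergence}; by Lemma \ref{le2convergence}, $U_{k+1}$ is either a positive $T$-periodic solution of \eqref{s11} or a pulsating traveling front. The first alternative is ruled out because it would force $U_{k+1}(x,t)\equiv\omega^{\ast}(t)$ for a $T$-periodic solution with $\omega^{\ast}(0)=\lambda_{k+1}$, contradicting the isolation of $\omega_k$ from below. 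Hence $U_{k+1}$ is a pulsating traveling front. Writing $\omega_{k+1}(t):=\lim_{x\to+\infty}U_{k+1}(x,t)$ and $\omega_k^{-}(t):=\lim_{x\to-\infty}U_{k+1}(x,t)$, monotonicity in $x$ gives $\omega_{k+1}(0)\leq\lambda_{k+1}\leq\omega_k^{-}(0)$; strict inequality $\omega_{k+1}<\omega_k$ follows because $U_{k+1}$ is strictly decreasing in $x$. To identify $\omega_k^{-}\equiv\omega_k$, I would use that $U_{k+1}$ is steeper than the spatially constant entire solution $\omega_k(t)$, so their graphs intersect at most once in $x$; combined with the choice of $\lambda_{k+1}$ leaving no periodic orbit in $(\lambda_{k+1},\omega_k(0))$ and with isolation of $\omega_k$ from above (which follows from steepness of $U_k$ at the lower limit), the only $T$-periodic solution compatible with $\omega_k^{-}(0)\in[\lambda_{k+1},\omega(\alpha,0)]$ is $\omega_k$ itself.

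\textbf{Main obstacle.} The core technical difficulty is establishing stability from below in part (1): one must convert the ODE-level instability (a Floquet-type exponent) into a PDE-level obstruction to a pulsating front whose right limit is $\omega_k$. The $T$-periodic supersolutions provided by (H2) are the natural barriers, but weaving them together with the steepness/zero-number machinery — so as to preclude the coexistence of $U_k(+\infty,\cdot)=\omega_k$ with an unstable manifold leaking out of $\omega_k$ below — is the delicate step, and it is what justifies the subsequent iterative construction in (2).
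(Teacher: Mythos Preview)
Your Part (1) has a genuine gap in both subcases. For isolation, your claim that $U_k(\cdot,0)-\tilde{\omega}_n(0)$ ``already has a zero'' is false: $U_k(\cdot,0)$ is strictly decreasing with range $(\omega_k(0),\omega_{k-1}(0))$, so if $\tilde{\omega}_n(0)<\omega_k(0)$ then $U_k(x,0)-\tilde{\omega}_n(0)>0$ for every $x$, and no zero exists to track. The zero-number machinery gives no traction here because the relevant differences are strictly one-signed. For stability, your Dancer--Hess step is also off: the $\bar{\phi}_n$ supplied by (H2) are merely $T$-periodic \emph{supersolutions}, not periodic solutions, so there is no connecting-orbit lemma producing an entire $W_n$ with $\bar{\phi}_n$ as its forward limit; and the concluding sentence (``the combined monotonicity \dots are incompatible'') is an assertion, not an argument.

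The paper proceeds by an entirely different mechanism that you have not touched. It first proves (Lemma~\ref{le2terrace}) that $\omega_k$ itself is steeper than every entire solution, hence equals $u_\infty(\cdot,\cdot;\omega_k(0))$. It then proves a comparison lemma (Lemma~\ref{le3terrace}): any supersolution $v$ on a right half-line moving with average speed $c<c_\ast$ and with boundary value $\omega_k(t)$ must, along some time sequence, satisfy $v(x(t)+x,t)\to\omega_k(t)$ for $x\ge 0$ --- this is extracted from the spreading-speed estimate of Lemma~\ref{le1speed} together with the steepness of $\omega_k$. Both the isolation and the stability contradictions are then obtained by building explicit exponential supersolutions $v(x,t)=\min\{\omega_k(t),\ \varphi_k(t)e^{-\lambda(x-ct)}+h(t)\}$, where $h$ is either a nearby periodic solution $h_j$ (for isolation) or a periodic supersolution $\phi_{k,j}$ from (H2) (for stability), and $\varphi_k$ is the Floquet eigenfunction at $\omega_k$; one checks $v$ is a supersolution on the moving domain and then notes it stays bounded away from $\omega_k$ for large $x$, contradicting Lemma~\ref{le3terrace}. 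This is the missing idea in your plan: the obstruction is not a zero-number count against $U_k$, but a spreading-speed/supersolution argument against $\omega_k$.

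Your Part (2) is essentially the same as the paper's Step~3, though the paper does not argue isolation of $\omega_k$ from above; once $\omega_k$ is isolated and stable from below and $\lambda_{k+1}$ is chosen close enough to $\omega_k(0)$, Lemmas~\ref{le1convergence} and~\ref{le2convergence} directly force $u_\infty(\cdot,\cdot;\lambda_{k+1})$ to be a pulsating front with left limit $\omega_k$.
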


The proof of Lemma \ref{le1terrace} relies strongly on the following two lemmas.

\begin{lemma}\label{le2terrace} Assume that $U_{k}(x,t):=u_{\infty}(x,t;\lambda_{k})$ with $0<\lambda_{k}<\alpha$ is a pulsating traveling front of \eqref{s11} connecting $T$-periodic solutions $\omega_{k-1}(t)$ to $\omega_{k}(t)<\omega_{k-1}(t)$, then
$\omega_{k}(t)$ is steeper than any other entire solution of \eqref{s11}. Moreover,
$\omega_{k}(t)\equiv u_{\infty}(x,t;\omega_{k}(0))$.
\end{lemma}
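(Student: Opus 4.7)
The plan is to prove (ii) first and then derive (i) as an immediate corollary via Lemma \ref{lem-limitset}. Set $V(x,t) := u_{\infty}(x,t;\omega_{k}(0))$. By Lemmas \ref{le1convergence} and \ref{le2convergence}, $V$ is an entire solution of \eqref{s11} with $V(0,0)=\omega_{k}(0)$ that is either a spatially constant $T$-periodic solution or a strictly decreasing pulsating traveling front with a positive speed (Remark \ref{re2convergence}). In the first case, $V(t)$ solves \eqref{s13} with $V(0)=\omega_{k}(0)$, whence $V\equiv\omega_{k}$ immediately; so the heart of the argument is to rule out the pulsating-front case.

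Assume therefore that $V$ is a pulsating traveling front connecting $T$-periodic solutions $\omega_{-}(t)$ at $x=-\infty$ and $\omega_{+}(t)$ at $x=+\infty$. The strict monotonicity of $V$ together with $V(0,0)=\omega_{k}(0)$ forces $\omega_{-}(0)>\omega_{k}(0)>\omega_{+}(0)$ strictly. Both $V$ and $U_{k}$ are $\omega$-limit orbits of $\tilde{u}(\cdot,\cdot;0)$, so Lemma \ref{lem-limitset} gives that each is steeper than the other. I would exploit this mutual steeper property as follows: at any tangency $V(x_{1},t')=U_{k}(x_{1},t'+mT)$, Definition \ref{steeper} applied from both sides offers only the common option of coincidence up to time shift, since the two strict derivative inequalities $\partial_{x}V<\partial_{x}U_{k}$ and $\partial_{x}U_{k}<\partial_{x}V$ cannot hold simultaneously. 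Thus $V(x,t)=U_{k}(x-mL_{k},t)$ for some $m\in\mathbb{Z}$, where $L_{k}>0$ is the wave length of $U_{k}$; evaluating at $(0,0)$ gives $\omega_{k}(0)=U_{k}(-mL_{k},0)\in(\omega_{k}(0),\omega_{k-1}(0))$, a contradiction. Hence no tangency occurs, $V-U_{k}(\cdot,\cdot+mT)$ has constant sign for every $m\in\mathbb{Z}$, and evaluating at $(0,0)$ pins this sign down as $V(x,t)<U_{k}(x-mL_{k},t)$ on all of $\mathbb{R}^{2}$ for every $m$.

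Letting $m\to-\infty$, since $L_{k}>0$ and $U_{k}(x,t)\to\omega_{k}(t)$ as $x\to+\infty$, the translates $U_{k}(\cdot-mL_{k},\cdot)$ converge to $\omega_{k}$ locally uniformly, so $V\leq\omega_{k}$ pointwise. Passing $x\to-\infty$ gives $\omega_{-}(t)\leq\omega_{k}(t)$, contradicting $\omega_{-}(0)>\omega_{k}(0)$. This closes (ii). For (i), the identity $\omega_{k}\equiv u_{\infty}(\cdot,\cdot;\omega_{k}(0))$ exhibits $\omega_{k}$ as an $\omega$-limit orbit of $\tilde{u}(\cdot,\cdot;0)$, and Lemma \ref{lem-limitset} immediately yields that $\omega_{k}$ is steeper than every entire solution of \eqref{s11} with values in $[0,\omega(\alpha,t)]$. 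The main obstacle I anticipate is the mutual-steeper dichotomy: one must verify that coincidence at a single tangency propagates via Definition \ref{steeper} to the whole-space identity $V(\cdot,\cdot)\equiv U_{k}(\cdot-mL_{k},\cdot)$ through the pulsating-front relation $U_{k}(x,t+T)=U_{k}(x-L_{k},t)$, and one must confirm $L_{k}>0$ from the construction so that the limit $m\to-\infty$ genuinely collapses $U_{k}(\cdot-mL_{k},\cdot)$ to the constant state $\omega_{k}$.
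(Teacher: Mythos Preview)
Your argument is correct, but it proceeds in the opposite order from the paper and uses a different mechanism. The paper proves the ``steeper'' assertion for $\omega_k$ first and then reads off the identity $\omega_k\equiv u_\infty(\cdot,\cdot;\omega_k(0))$ as a one-line consequence. Concretely, it uses the pulsating relation $u_\infty(x,t-jT;\lambda_k)=u_\infty(x+jL_k,t;\lambda_k)$ to translate time-shifts of $U_k$ into space-shifts, so that the known inequality $SGN[U_k(\cdot,t')-v(\cdot,t'+iT)]\lhd[+\,-]$ (from Lemma~\ref{lem-limitset}) passes, via the semi-continuity Lemma~\ref{lem-zn2} and $j\to\infty$, directly to $SGN[\omega_k(t')-v(\cdot,t'+iT)]\lhd[+\,-]$; Proposition~\ref{prosteeper} then finishes. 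This avoids any case analysis: no need to distinguish whether $u_\infty(\cdot,\cdot;\omega_k(0))$ is a front or not.

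Your route instead proves the identification first by a squeeze argument: you compare the two $\omega$-limit orbits $V=u_\infty(\cdot,\cdot;\omega_k(0))$ and $U_k$ through the mutual-steeper dichotomy, rule out coincidence by evaluating at $(0,0)$, conclude $V<U_k(\cdot-mL_k,\cdot)$ for every $m$, and send $m\to-\infty$ to trap $V\le\omega_k$, contradicting $\omega_-(0)>\omega_k(0)$. This is a perfectly valid and self-contained alternative; its cost is the case split and the sandwich step, while its benefit is that it never explicitly invokes the semi-continuity of $SGN$ under pointwise limits. One minor point of phrasing: what you call a ``tangency'' is really any intersection point $V(x_1,t')=U_k(x_1,t'+mT)$; Definition~\ref{steeper} then forces the global coincidence you need, so the logic goes through exactly as you wrote.
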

\begin{proof}
Assume that $v$ is an entire solution of \eqref{s11} and $0<v(x,t)<\omega(\alpha,t)$.  By Lemma \ref{lem-limitset}, $U_{k}(x,t):=u_{\infty}(x,t;\lambda_{k})$ is steeper than $v(x,t)$, so we have
\begin{equation}\label{s41}
Z[u_{\infty}(\cdot,t'-jT;\lambda_{k})-v(\cdot,t')]\leq1\ \ {\rm and}\ \
SGN[u_{\infty}(\cdot,t'-jT;\lambda_{k})-v(\cdot,t')]\lhd[+\,-]
\end{equation}
 for any $t'\in\mathbb{R}$ and $j\in\mathbb{Z}$. It follows from Remark \ref{re2convergence} that there exists $L_k>0$ such that
 \[
 u_{\infty}(x-L_k,t;\lambda_{k})=u_{\infty}(x,t+T;\lambda_{k})\quad {\rm for\ any}\ (x,t)\in\Bbb{R}^2,
 \]
 hence, we have
 \[
 u_{\infty}(x-jL_k,t;\lambda_{k})=u_{\infty}(x,t+jT;\lambda_{k})\quad {\rm for\ any}\ (x,t)\in\Bbb{R}^2,\ j\in\Bbb{Z}.
 \]
  Letting  $j\to\infty$ in \eqref{s41} yields
\[
Z[\omega_{k}(t')-v(\cdot,t')]\leq\liminf_{j\to\infty}Z[u_{\infty}(\cdot,t'-jT;\lambda_{k})-v(\cdot,t')]\leq1,
\]
\[
SGN[\omega_{k}(t')-v(\cdot,t')]\lhd\liminf_{j\to\infty}Z[u_{\infty}(\cdot,t'-jT;\lambda_{k})-v(\cdot,t')]\lhd[+\,-].
\]
Due to the periodicity of $\omega_{k}$, we have $\omega_{k}(t')=\omega_{k}(t'+iT)$ for any $i\in\mathbb{Z}$. It then follows from Proposition \ref{prosteeper} that $\omega_{k}$ is steeper than $v$. By the arbitrariness of $v$, we have that $\omega_{k}(t)$ is steeper than $u_{\infty}(x,t;\omega_{k}(0))$. From Lemma \ref{lem-limitset}, we also have that $u_{\infty}(x,t;\omega_{k}(0))$ is steeper than $\omega_{k}(t)$. Since $u_{\infty}(0,0;\omega_{k}(0))
=\omega_{k}(0)$, we finally obtain $\omega_{k}(t)\equiv u_{\infty}(x,t;\omega_{k}(0))$. This completes the proof.
\end{proof}

\begin{lemma}\label{le3terrace}
Let $v(x,t)$ be a supersolution of \eqref{s11} on the domain $D=\{(x,t)\,|\,x\geq x(t)\}$ and $0<v(x,t)<\omega(\alpha,t)$. If
$v(x(t),t)=\omega_{k}(t)$ for all $t\in\mathbb{R}$, where $x(t)$ moves with the average speed $c\,(0<c<c_{\ast})$ such that $x(0)>a$ and
\[x(t+jT)\leq x(t+(j+1)T),\,\forall t\in\mathbb{R},j\in\mathbb{N},\]
then there exist $t_{j}\in[0,T]$ and $k_{j}\to\infty\,(j\to\infty)$ such that for any $x\geq0$, one has
\[
\liminf_{j\to\infty}\left[\,v(x(t_{j}+k_{j}T)+x,t_{j}+k_{j}T)-\omega_{k}(t_{j}+k_{j}T)\,\right]\geq0.
\]
\end{lemma}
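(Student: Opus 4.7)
My plan is to normalise $v$ along the sequence of times $nT$, shifting the spatial origin to track the boundary curve $x(\cdot)$, extract a limiting supersolution via parabolic compactness, and then produce the desired lower bound using hypothesis (H1) and the subcritical average speed $c<c_\ast$. The crucial freedom will be the choice of $t_n\in[0,T]$, which I use to keep the shifted boundary to the left of the new origin.

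For each $n\in\mathbb{N}$ I set $V_n(y,s):=v(y+x(nT),s+nT)$. By $T$-periodicity of $f$ and $\omega_k$, $V_n$ is a supersolution of \eqref{s11} on $\{(y,s):y\ge y_n(s)\}$ where $y_n(s):=x(s+nT)-x(nT)$, with $y_n(0)=0$, $y_n(T)\ge 0$ (the latter from the monotonicity assumption applied at $t=0$) and $V_n(y_n(s),s)=\omega_k(s)$. I pick $t_n\in[0,T]$ minimising $y_n(\cdot)$ on $[0,T]$, so that $y_n(t_n)\le y_n(0)=0$. Standard interior and boundary parabolic estimates together with the uniform bound $0<V_n\le\omega(\alpha,s)$ render $\{V_n\}$ precompact in $C^{2,1}_{loc}$; passing to a subsequence, $t_n\to t_\infty\in[0,T]$, $y_n(t_n)\to y^\ast\le 0$, and $V_n\to V_\infty$ locally uniformly to an entire supersolution of \eqref{s11} on the limit domain, still satisfying $V_\infty=\omega_k$ on the limit boundary.

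To upgrade this to $V_\infty(y,t_\infty)\ge\omega_k(t_\infty)$ for every $y\ge 0$ I would use hypothesis (H1) together with Lemma \ref{le1speed}. Since $0<c<c_\ast$, a suitable spatial translate $\underline u(\cdot-z_n,\cdot-\tau_n)$ of the compactly supported subsolution $\underline u$ from (H1), with $z_n$ sitting in $(x(\tau_n),+\infty)$ at its initial time $\tau_n\ll nT$, spreads at speed $c_\ast$ and, by (H1), locally approaches $\omega(\alpha,\cdot)>\omega_k(\cdot)$. Up to the first moment $T_n$ at which this translate reaches $\omega_k$ on the moving boundary $y=y_n(s)$, the comparison principle gives the translate $\le V_n$ on $\{y\ge y_n(s)\}$. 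Iterating over a sequence $\tau_n\to-\infty$ as $n\to\infty$, and exploiting the positive gap $c_\ast-c>0$ to guarantee that the comparison window $[\tau_n,T_n]$ grows without bound, the resulting inequality passes to the limit and yields $V_\infty(y,t_\infty)\ge\omega(\alpha,t_\infty)-\varepsilon\ge\omega_k(t_\infty)$ for arbitrary $\varepsilon>0$ and $y\ge 0$. Unwinding the shifts gives the stated liminf. The main obstacle is this comparison step: the auxiliary subsolution eventually overshoots $\omega_k$ on the moving boundary, forcing one to carefully iterate and truncate, and to use the gap $c_\ast-c>0$ to ensure the comparison windows exhaust $(-\infty,t_\infty]$; the minimising choice of $t_n$ is what ultimately keeps the target point interior to the limit domain.
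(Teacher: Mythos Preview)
Your approach has two genuine gaps.

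First, the compactness step fails. Interior parabolic estimates produce $C^{2,1}$ bounds only for \emph{solutions} of \eqref{s11}; a supersolution has no reason to inherit such regularity from the equation. In every application of this lemma in the paper, $v$ is built as a minimum of two smooth functions and is therefore merely Lipschitz, so your sequence $\{V_n\}$ is not precompact in $C^{2,1}_{loc}$ (not even in $C^1$), and no limit $V_\infty$ can be extracted in the way you describe.

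Second, and more seriously, the comparison with $\underline u$ cannot be rescued by ``iterate and truncate''. On the moving boundary $x=x(t)$ one has $v=\omega_k$, so any subsolution placed below $v$ must stay $\le\omega_k$ there. Hypothesis (H1) forces $\underline u$ to converge locally to $\omega(\alpha,\cdot)>\omega_k$, so each comparison window terminates at a moment when $\underline u$ is only \emph{locally} large near its centre; restarting from a new translate discards exactly the spatial spread you need to control $V_\infty(y,t_\infty)$ for all $y\ge0$. Your sketch contains no mechanism to pass from ``locally $\ge\omega(\alpha,\cdot)-\varepsilon$ at the end of each window'' to the uniform lower bound you claim, and the gap $c_\ast-c>0$ does not by itself supply one.

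The paper's argument is structurally different and avoids both difficulties. It works with the \emph{solution} $\tilde u(\cdot,\cdot-jT;a)$ rather than with $v$ or $\underline u$: for each $j$ this solution starts from $0$ below $v$ on $D$, and by Lemma~\ref{le1speed}(2) (using $c<c_\ast$) it eventually exceeds $\omega_k$ on the boundary. One then \emph{defines} $t_j+k_jT$ as the first boundary crossing time, so that comparison yields $\tilde u(\cdot,t_j+k_jT-jT;a)\le v(\cdot,t_j+k_jT)$ on $\{x\ge x(t_j+k_jT)\}$. The crucial step---entirely absent from your sketch---is that the shifted $\tilde u$'s converge (legitimately, since $\tilde u$ is a genuine solution) to an $\omega$-limit orbit $v_\infty$ with $v_\infty(0,t')=\omega_k(t')$; Lemma~\ref{le2terrace} and Lemma~\ref{lem-limitset} then say that $\omega_k$ and $v_\infty$ are each steeper than the other, forcing $v_\infty\equiv\omega_k$. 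The desired liminf follows from $v\ge\tilde u\to\omega_k$. Your minimisation choice of $t_n$ plays no role in this scheme; the $t_j$ are dictated by the crossing times.
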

\begin{proof}
Combining the properties of $x(t)$ with Lemma \ref{le1speed} (2), we know that
\[
\tilde{u}(x(t),t-jT;a)\to\omega(\alpha,t)\,(t\to\infty)\quad {\rm for\ any} \ j\in\Bbb{N}.
\]
It is easy to come that $0=\tilde{u}(x,0;a)<v(x,0)\,(x\geq x(jT) \geq x(0)\geq a)$ and $v(x,t)<\omega(\alpha,t)$. Then there exist $t_{j}\in[0,T)$ and $k_{j}\in\Bbb{N}$ with $k_j\geq j$ such that
\begin{equation}\label{terrace1}
 \tilde{u}(x(t_{j}+k_{j}T),t_{j}+k_{j}T-jT;a)=v(x(t_{j}+k_{j}T),t_{j}+k_{j}T)=\omega_{k}(t_{j}+k_{j}T).
\end{equation}
Since $v$ is a super-solution of \eqref{s11} on $D$, then by the comparison principle \cite{Friedman},  for any $j\in\Bbb{N}$ one has
 \[\tilde{u}(x,t_{j}+k_{j}T-jT;a)\leq v(x,t_{j}+k_{j}T) \,\,\,\,\text{for}\,\,x\geq x(t_{j}+k_{j}T).\]
Due to \eqref{terrace1} and Lemma \ref{le1speed} (2), we conclude that $k_j-j\to +\infty$ as $j\to +\infty$. Assume that $t_{j}\to t'\in[\,0,T\,]\,(j\to\infty)$, then
we have
\[
\tilde{u}(x+x(t_{j}+k_{j}T),t+t_{j}+k_{j}T-jT;a)\to v_{\infty}(x,t+t')\quad {\rm as}\ j\to\infty.
\]
Using \eqref{terrace1}, we easily  get that
\begin{equation}\label{terrace3}
v_{\infty}(0,t')=\lim_{j\to\infty}\tilde{u}(x(t_{j}+k_{j}T),t_{j}+k_{j}T-jT;a)=\lim_{j\to\infty}\omega_{k}(t_{j}+k_{j}T)=\omega_{k}(t').
\end{equation}
Thus, we have $v_{\infty}$ is a limit orbit of $\tilde{u}$. Therefore, $v_{\infty}$ is steeper than $\omega_{k}$. On the other hand, it follows from Lemma \ref{le2terrace} that $\omega_{k}$ is steeper than $v_{\infty}$. By \eqref{terrace3}, we have $\omega_{k}\equiv v_{\infty}$. Hence, for any $x\geq0$, we obtain that
\[
\liminf_{j\to\infty}\left[\,v(x(t_{j}+k_{j}T)+x,t_{j}+k_{j}T)-\omega_{k}(t_{j}+k_{j}T)\,\right]\geq0.
\]
This completes the proof.
\end{proof}\\

\noindent \textbf{Proof of Lemma \ref{le1terrace}:} We completes the proof by three steps:

{\it Step $1$. $\omega_{k}(t)$ is isolated from below.}

If not, we assume that there exists the sequence $\{h_{j}\}_{j\in\mathbb{N}}$ of $T$-periodic solutions such that
\begin{equation}\label{s42}
h_{j}(t)<\omega_{k}(t)\quad {\rm and}\
\,\,\,h_{j}(t)\to\omega_{k}(t)\,(j\to\infty) \quad {\rm uniformly\ in}\  \Bbb{R}.
\,\end{equation}

 Let $\mu_k:=-\frac{1}{T}\int_{0}^{T}f_{u}(t,\omega_{k}(t))dt$ and
 \begin{equation}\label{s43}
 \varphi_k(t)=\exp\left(\int_{0}^{t}f_{u}(s,\omega_{k}(s))ds-\frac{t}{T}\int_{0}^{T}f_{u}(t,\omega_{k}(t))dt \right).
 \end{equation}
 Clearly, $\mu_k$ and $\varphi_k(t)$ are the eigenvalue and the eigenfunction of the  following periodic eigenvalues problem:
 \begin{equation}\label{eigenvalue1}
 \begin{cases}
 \varphi'(t)-f_{u}(t,\omega_{k}(t))\varphi(t)=\mu\varphi(t),\quad \quad \,\,\forall t\in\mathbb{R},\\
 \varphi(t)>0\,\,\text{and}\,\,\varphi(t)=\varphi(t+T),\quad\quad\quad\,\forall t\in\mathbb{R}.
 \end{cases}
 \end{equation}
Due to \eqref{s42}, there must be $\mu_k\geq 0$. Let $v(x,t)=\min\{\omega_{k}(t),\,\varphi_k(t) e^{-\lambda(x-ct)}+h_{j}(t)\}$, where $0<c<c_{\ast}$ and $\lambda>0$. Clearly, there exists a function $x(t)$, which moves with the average speed $c\,(0<c<c_{\ast})$ and satisfies $x(t+iT)<x(t+(i+1)T)$ for any $t\in\Bbb{R}$ and $i\in\Bbb{Z}$,  such that
\[
v(x(t),t)=\omega_{k}(t) \quad {\rm  for\ any}\ t\in\mathbb{R}\quad {\rm and}\quad
v(x,t)<\omega_{k}(t) \quad {\rm  for\ any}\  x>x(t),\ t\in\mathbb{R}.
\]
Define the domain $D=\{(x,t)\,|\,x\geq x(t)\}$. It is easy to verify that $v(x,t)$ is a super-solution of \eqref{s11} on $D$ for   $\lambda>0$ small enough and $j\in\Bbb{N}$ large enough. Namely, for   $\lambda>0$ small enough and $j\in\Bbb{N}$ large enough,  one has
\begin{equation*}
\begin{aligned}
&v_{t}-v_{xx}-f(t,v)\\
=&e^{-\lambda(x-ct)}\left(c\lambda\varphi_k(t)+\varphi_k^\prime(t)\right)+h^\prime_{j}(t)-\lambda^{2}e^{-\lambda(x-ct)}\varphi_k(t)-f\left(t,\varphi_k(t) e^{-\lambda(x-ct)}+h_{j}(t)\right)\\
=&e^{-\lambda(x-ct)}\left(c\lambda\varphi_k(t)+\varphi_k^\prime(t)-\lambda^{2}\varphi_k(t)\right)-f_{u}(t,h_{j}(t))e^{-\lambda(x-ct)}\varphi_k(t)+o\left(e^{-\lambda(x-ct)}\varphi_k(t)\right)\\
=& e^{-\lambda(x-ct)} \left(c\lambda\varphi_k(t)+\mu_k\varphi_k(t)+f_{u}(t,\omega_{k}(t))\varphi_k(t)-\lambda^{2}\varphi_k(t)\right)-f_{u}(t,h_{j}(t))e^{-\lambda(x-ct)}\varphi_k(t)\\
& + o\left(e^{-\lambda(x-ct)}\varphi_k(t)\right) \\
=&e^{-\lambda(x-ct)}\varphi_k(t)(c\lambda-\lambda^{2}+\mu_k)+\left(f_{u}(t,\omega_{k}(t))-f_{u}(t,h_{j}(t))\right)e^{-\lambda(x-ct)}\varphi_k(t)\\
&+o\left(e^{-\lambda(x-ct)}\varphi_k(t)\right)\\
\geq& 0
\end{aligned}
\end{equation*}
for any $(x,t)\in D$. Now by Lemma \ref{le3terrace}, there exist $k_{i}\to\infty\,(i\to\infty)$ and $t_{i}\in[\,0,T\,]$ such that
\[
\liminf_{i\to\infty}\left[v(x(t_{i}+k_{i}T)+x,t_{i}+k_{i}T)-\omega_{k}(t_{i}+k_{i}T)\right]\geq0,\quad\forall x\geq0.
\]
However, by the definition of $v$, there exists $M>0$ such that $v(x(t)+M,t)<\omega_{k}(t)$ for any $t\in\Bbb{R}$. This is a contradiction. Therefore, $\omega_{k}(t)$ is isolated from below.\\

{\it Step $2$.  $\omega_{k}(t)$ is stable from below.}

Suppose on the contrary that $\omega_{k}(t)$ is unstable from below. Then $\mu_k\geq 0$. Since $\omega_{k}(t)$ is isolated from below with respect to \eqref{s13},  it follows from the assumption (H2) that there exists a sequence $\{\phi_{k,j}(t)\}_{j\in\Bbb{N}}$ of supersolutions of \eqref{s13} such that $\omega_{k+1}(t)<\phi_{k,j}(t)<\omega_k(t)$, $\phi_{k,j}(t)$ is $T$-periodic, and $\phi_{k,j}(t)$ converges to $\omega_k(t)$ uniformly in $t\in\Bbb{R}$ as $j\to\infty$.

 Let $v(x,t)=\min\{\omega_{k}(t),\,\varphi_k(t) e^{-\lambda(x-ct)}+\phi_{k,j}(t)\}$, where $0<c<c_{\ast}$ and $\lambda>0$. As before, there exists a function $x(t)$ such that
\[
v (x (t),t)=\omega_{k}(t) \quad {\rm  for\ any}\ t\in\mathbb{R}\quad {\rm and}\quad
v (x,t)<\omega_{k}(t) \quad {\rm  for\ any}\  x>x (t),\ t\in\mathbb{R}.
\]
In particular, $x (t)$ moves with the average speed $c\,(0<c<c_{\ast})$ and satisfies $x (t+iT)<x (t+(i+1)T)$ for any $t\in\Bbb{R}$ and $i\in\Bbb{Z}$.
Define the domain $D =\{(x,t)\,|\,x\geq x (t)\}$. For   $\lambda>0$ small enough and $j\in\Bbb{N}$ large enough,  one has
\begin{equation*}
\begin{aligned}
& v_{t}- v_{xx}-f(t,v )\\
=&e^{-\lambda(x-ct)}\left(c\lambda\varphi_k(t)+\varphi_k^\prime(t)\right)+\phi_{k,j}^\prime (t)-\lambda^{2}e^{-\lambda(x-ct)}\varphi_k(t)-f\left(t,\varphi_k(t) e^{-\lambda(x-ct)}+\phi_{k,j}(t)\right)\\
\geq&e^{-\lambda(x-ct)}\left(c\lambda\varphi_k(t)+\varphi_k^\prime(t)-\lambda^{2}\varphi_k(t)\right)+f(t,\phi_{k,j}(t)) -f\left(t,\varphi_k(t) e^{-\lambda(x-ct)}+\phi_{k,j}(t)\right)\\
=& e^{-\lambda(x-ct)} \left(c\lambda\varphi_k(t)+\mu_k\varphi_k(t)+f_{u}(t,\omega_{k}(t))\varphi_k(t)-\lambda^{2}\varphi_k(t)\right)-f_{u}(t,\phi_{k,j}(t))e^{-\lambda(x-ct)}\varphi_k(t)\\
& + o\left(e^{-\lambda(x-ct)}\varphi_k(t)\right) \\
=&e^{-\lambda(x-ct)}\varphi_k(t)(c\lambda-\lambda^{2}+\mu_k)+\left(f_{u}(t,\omega_{k}(t))-f_{u}(t,\phi_{k,j}(t))\right)e^{-\lambda(x-ct)}\varphi_k(t)\\
&+o\left(e^{-\lambda(x-ct)}\varphi_k(t)\right)\\
\geq& 0
\end{aligned}
\end{equation*}
for any $(x,t)\in D $, which implies that $v (x,t)$ is a supersolution of \eqref{s11} on $D $ for   $\lambda>0$ small enough and $j\in\Bbb{N}$ large enough. It follows from Lemma \ref{le3terrace} that there exist $k_{i}\to\infty\,(i\to\infty)$ and $t_{i}\in[\,0,T\,]$ such that
\[
\liminf_{i\to\infty}\left[v (x (t_{i}+k_{i}T)+x,t_{i}+k_{i}T)-\omega_{k}(t_{i}+k_{i}T)\right]\geq0,\quad\forall x\geq0.
\]
But by the definition of $v$, there exists $M>0$ satisfying $v(x(t)+M,t)<\omega_{k}(t)$ for any $t\in\Bbb{R}$. This is a contradiction. Therefore, $\omega_{k}(t)$ is stable from below. \\

{\it Step $3$.  $U_{k+1}(x,t)$ is a pulsating traveling wave connecting $\omega_{k+1}(t)$ to $\omega_{k}(t)$.}

Let $\lambda_{k+1}\in(0,\omega_{k}(0))$ close enough to $\omega_{k}(0)$. By Lemmas \ref{le1convergence} and \ref{le2convergence}, $U_{k+1}(x,t):=u_{\infty}(x,t;\lambda_{k+1})$ is  a pulsating traveling wave with a positive speed $c_{k+1}$ connecting $\omega_{k}(t)$ to another $T$-periodic solution $\omega_{k+1}(t)$, where $\omega_{k+1}(t)<\omega_{k}(t)$. In particular, there is
\[
\lim_{x\to-\infty}U_{k+1}(x,t)=\omega_k(t)\quad {\rm and}\quad \lim_{x\to+\infty}U_{k+1}(x,t)=\omega_{k+1}(t).
\]
This completes the proof of Lemma \ref{le1terrace}. $\square$\\

\noindent \textbf{Proof of Theorem \ref{th1terrace}:} According to the argument above, we know that there exist a sequence $\{\omega_k(t)\}_{k=0}^\infty$ of $T$-periodic solutions of \eqref{s11} and a sequence $\{U_k(x,t)\}_{k=1}^\infty$ of pulsating traveling fronts of \eqref{s11}, which satisfy

\noindent (a) $0\leq \cdots<\omega_{k+1}(t)<\omega_k(t)<\cdots<\omega_0(t):=\omega(\alpha,t)$ for any $t\in\Bbb{R}$.

\noindent (b)  $\omega_k(t)$ is isolated and stable form below.

\noindent (c) $U_k(x,t)$ is the pulsating traveling front of \eqref{s11} with  speed $c_k>0$ connecting $\omega_{k-1}(t)$ and $\omega_k(t)$. Moreover, $\partial_xU_k(x,t)<0$.

\noindent (d) $U_k(x,t)=u_\infty(x,t;\lambda_k)$, $\alpha>\lambda_1>\omega_1(0)>\lambda_2>\omega_2(0)>\cdots>\lambda_N>0$, $c_k=\frac{L_k}{T}$.

\noindent (e) $U_k(x,t)=u_\infty(x,t;\lambda_k)$ and $\omega_k(t)$ are steeper than any other entire solutions.

To complete the proof of Theorem \ref{th1terrace}, we divide the remainder of the proof into three parts:

 (i) Sequence $\{\omega_k(t)\}_{k=0}^\infty$ and  $\{U_k(x,t)\}_{k=1}^\infty$ are finite, namely, there exists $N\in\mathbb{N}$ such that $\omega_{N}(t)\equiv0$.

On the contrary, we assume that   the sequence $\{\omega_k(t)\}_{k=0}^\infty$ is infinite. Due to the above (a), there exists a $T$-periodic solution $\omega_{\infty}(t)$ of \eqref{s11}, which satisfies
\[
\lim_{k\to\infty}\omega_{k}(t)=\omega_{\infty}(t)\quad {\rm and}\quad 0\leq \omega_{\infty}(t)<\omega_{k}(t)\quad {\rm for\ any}\ t\in\mathbb{R}.
\]
Let $\varphi(t)$ and $\mu$ be defined as before.
Let $v(x,t)=\min\{\omega_{k}(t),\,\varphi(t) e^{-\lambda(x-ct)}+\omega_{\infty}(t)\}$, where $0<c<c_{\ast},\lambda>0$. It is easy to see that there exists a function $x(t)$, which moves with the average speed $c $ and satisfies $x(t+iT)<x(t+(i+1)T)$ for any $t\in\Bbb{R}$ and $i\in\Bbb{Z}$,  such that
\[
v(x(t),t)=\omega_{k}(t) \quad {\rm  for\ any}\ t\in\mathbb{R}\quad {\rm and}\quad
v(x,t)<\omega_{k}(t) \quad {\rm  for\ any}\  x>x(t),\ t\in\mathbb{R}.
\]
Let $D=\{(x,t)\,|\,x\geq x(t)\}$. As before, for any $(x,t)\in D$, one has
\begin{equation*}
\begin{aligned}
&v_{t}-v_{xx}-f(t,v)\\
=&e^{-\lambda(x-ct)}\varphi(t)(c\lambda-\lambda^{2}+\mu)+(f_{u}(t,\omega_{k}(t))-f_{u}(t,\omega_\infty(t))e^{-\lambda(x-ct)}\varphi(t) \\
&+o\left(e^{-\lambda(x-ct)}\varphi(t)\right)\\
\geq& 0,
\end{aligned}
\end{equation*}
if we take  $\lambda$ small enough and $k\in\Bbb{N}$ large enough. Using Lemma \ref{le3terrace}, we also get a contradiction.

(ii) Sequence $c_{k}$ of the speeds  is nondecreasing, namely, $0<c_1\leq c_2\leq \cdots\leq c_N$.

By virtue of Remark \ref{re2convergence}, we have
\[
c_{k}=\frac{L_k}{T}=\lim_{j\to\infty}\frac{\ell(j,\lambda_{k})}{jT}.
\]
It follows from Lemma \ref{lem3.1} and $\ell(j,\lambda)\to \infty$ ($j\to\infty$) that $\partial_x\tilde{u}(x,t;a)<0$, which implies that $\ell(j,\lambda)$ is decreasing on $\lambda\in (0,\alpha)$.
Thus, due to $\lambda_1>\lambda_2>\cdots>\lambda_N$, we obtain $0<c_1\leq c_2\leq \cdots\leq c_N$.

$\bf{(iii)}$  Propagating terrace $Q=\{ \omega_{k}(t)_{0\leq k\leq N},U_{k}(x,t)_{1\leq k\leq N}\}$ is minimal and unique.

 We  first show  that  $Q$ is  minimal in the sense of Definition  \ref{def1terrace}. Suppose on the contrary that $Q$ is not minimal, namely, there exists a propagating terrace
 \[
 P=\{\psi_{j}(t)_{0\leq j\leq N_{0}},\,V_{j}(x,t)_{1\leq j\leq N_{0}}\}
 \]
 such
that there exist some pulsating traveling front $V_{j_0}\in P$ and some $k_0$ such that $V_{j_0}(x,t)$ connects two periodic solutions $\psi_{j_0+1}(t) $ and $\psi_{j_0}(t) $ and $\psi_{j_0+1}(t)<\omega_{k_0}(t)<\psi_{j_0}(t)$ for any $t\in\Bbb{R}$. Consequently, there exists some $x_0\in\Bbb{R}$ such that $V_{j_0}(x_0,0)=\omega_{k_0}(0)$. Let $\tilde{V}_{j_0}(x,t):=V_{j_0}(x,t)$. Clearly, $\tilde{V}_{j_0}(x,t)$ is  also the pulsating traveling front of \eqref{s11} connecting two periodic solutions $\psi_{j_0+1}(t)$ and $\psi_{j_0}(t)$.  Since $V_{j_0}(x,t)$ is steeper than any other entire solution of \eqref{s11}, $\tilde{V}_{j_0}(x,t)$ is also steeper than any other entire solutions.  At the same time,  $\omega_{k_0}(t)$ is steeper than any other entire solution. By Definition \ref{steeper}, we have $\tilde{V}_{j_0}(x,t)=\omega_{k_0}(t)$, which is a contradiction.

Now we show that $Q$ is unique in the sense that any minimal propagating terrace has the same $\omega_k(t)_{0\leq k\leq N}$.  Let $\widetilde{P}=\{\psi_{k}(t)_{0\leq k\leq N^\prime},\,V_{k}(x,t)_{1\leq k\leq N^\prime}\}$  be a minimal propagating terrace in the sense of Definition \ref{def1terrace}. Since both $\widetilde{P}$ and $Q$ are minimal in the sense of Definition \ref{def1terrace}, it follows that  $\{\omega_{k}(t)\,|\,0\leq k\leq N\}$ is equal to $\{\psi_{k}(t)\,|\,0\leq k\leq N^\prime\}$, which implies the uniqueness of $Q$. Thus, we have $\widetilde{P}=\{\omega_{k}(t)_{0\leq k\leq N},\,V_{k}(x,t)_{1\leq k\leq N}\}$.  For each $k$, both $V_{k}(x,t)$ and $U_{k}(x,t)$ are pulsating traveling fronts of \eqref{s11} connecting two periodic solutions $\omega_{k+1}(t)$ and $\omega_{k}(t)$. Since they  are steeper  than each other, then by an argument as above we have that $V_{k}(\cdot+x_0,\cdot)\equiv U_{k}(\cdot,\cdot)$ for some $x_0\in\Bbb{R}$. This further implies that the pulsating traveling front $U_{k}(x,t)$ is unique up to  spatially translations. This completes the proof.
 $\square$

\subsection{Convergence to the minimal propagating terrace}

 In this subsection, let us show the convergence result, namely, we prove Theorem \ref{th2terrace}.

\noindent \textbf{Proof of Theorem \ref{th2terrace}:}
(1) We first prove that the solution $\tilde{u}(x,t;a)$ of \eqref{s11} converges to pulsating traveling fronts $\{U_{k}(x,t)\}_{1\leq k \leq N}$ along the moving frames with speed $c_{k}$ and some sublinear drifts, namely, we prove the statement (1) of Theorem \ref{th2terrace}.

Fix $1\leq k\leq N$. For  sufficiently large $t$, we define $j(t)\in\mathbb{N}$ such that $j(t)T\leq t<(j(t)+1)T$, and a piecewized affine function  $g_k(t)$ with $g_{k}(t)=j(t)T-\frac{1}{c_{k}}\ell(j(t),\lambda_k)$. Recall the proof of Theorem \ref{th1terrace}, the sequence $\left\{\frac{1}{j}\sum^{i=j}_{i=1}\ell_{i}\right\}_{j=1}^{\infty}$, that is $\left\{\frac{1}{j} \ell(j,\lambda_k)\right\}_{j=1}^{\infty}$ converges to $c_{k}T$ as $j\to\infty$. Therefore, $g_{k}(t)=o(j(t))=o(t)$ as $t\rightarrow \infty$. In view of Lemmas \ref{le1convergence} and \ref{le1terrace}, we have
\[
\tilde{u}(x,t+jT;-\ell(j,\lambda_{k}))\to U_{k}(x,t)\,\,(j\to\infty)\,\,\rm{locally \,\,uniformly \,\,on}\,\, \mathbb{R}^{2}.
\]
Since $\tilde{u}(x-a,t+jT;-\ell(j,\lambda_{k}))=\tilde{u}(x+\ell(j,\lambda_{k}),t+jT;a)$, we then have
\[
\tilde{u}(x+\ell(j,\lambda_{k}),t+jT;a)\to U_{k}(x-a,t)\,\,(j\to\infty)\,\,\rm{locally \,\,uniformly \,\,on}\,\, \mathbb{R}^{2}.
\]
Consequently, we have
\[
\tilde{u}(x+\ell(j(t),\lambda_{k}),t;a)\to U_{k}(x-a,t-j(t)T)\quad {\rm as}\ t\to\infty \,\,{\rm locally \,\,uniformly \, on}\, x\in\mathbb{R}.
\]
Since $ t-g_{k}(t)-\frac{1}{c_{k}} \ell(j(t),\lambda_{k}) = t-j(t)T $, we then have
\begin{eqnarray*}
&&\tilde{u}(x+c_k(t-g_{k}(t)),t;a)\\
&\to& U_{k}(x-a+c_k(t-g_{k}(t)) -\ell(j(t),\lambda_{k}),t-j(t)T)\\
&=&U_{k}(x-a+c_k(t-g_{k}(t)) -\ell(j(t),\lambda_{k})+c_kj(t)T,t)\\
&=&U_{k}(x+c_kt-a,t)
\end{eqnarray*}
as $t\to\infty$  locally  uniformly  on $x\in\mathbb{R}$.

(2) We prove the statement (2) of Theorem \ref{th2terrace}.

Consider $x+c_{1}(t-g_{1}(t))\to-\infty$. Since $\partial_xU_{1}(x,t)<0$ and $U_{1}(x-c_1T,t)=U_{1}(x,t+T)$ for any $(x,t)\in\Bbb{R}^2$ and
\[
\lim_{x\to -\infty}U_{1}(x,t)=\omega(\alpha,t) \quad {\rm uniformly\ in}\ t\in [0,+\infty),
\]
for any $n\in\Bbb{N}$, then for any $\varepsilon>0$, there exists $C_{0}>0$ such that for any $x\leq-C_{0}$ and $t\in\Bbb{R}$,
\begin{equation}\label{tc2}
\omega(\alpha,t)-\frac{\varepsilon}{2}\leq
U_{1}(x+c_{1}t-a,t)\leq\omega(\alpha,t).
\end{equation}
According to the statement (1), there exists  and $n_{0}\in\mathbb{N}$ such that for any $x\in[-C_{0},C_{0}]$ and $t\geq n_{0}T$,
\begin{equation}\label{tc6}
|\tilde{u}(x+c_{1}(t-g_{1}(t)),t;a)-U_{1}(x+c_{1}t-a,t)|\leq\frac{\varepsilon}{2}.
\end{equation}
Since $\tilde{u}(x,t;a)$ is nonincreasing with respect to $x$,  then by  \eqref{tc2}, \eqref{tc6} and the periodicity of $U_{1}$ and $\omega(\alpha,t)=\omega_0(t)$, we have
\[
\omega(\alpha,t)-\varepsilon\leq \tilde{u}\left(x+c_{1}(t-g_{1}(t)),t;a\right)\leq\omega(\alpha,t)
\]
for any $x\leq-C_{0}$ and $t\geq S_{0}T$, namely, we have
\[
\|\tilde{u}(\cdot,t;a)-\omega(\alpha,t)\|_{L^{\infty}(I_{0,c}(t))}\leq\varepsilon\quad  {\rm for\ any}\  t\geq n_{0}T\,\, {\rm with}\ \,\,C:=C_0.
\]

Similarly, we can show that, for any $\varepsilon>0$, there exist $C_N>0$ and $n_{N}\in\mathbb{N}$ such that
\[
\|\tilde{u}\left(\cdot,t;a\right)\|_{L^{\infty}(I_{N,c}(t))}\leq\varepsilon \quad {\rm for\ any}\  t\geq n_{N}T\,\, {\rm with}\ \,\,C:=C_N.
\]

Finally, fix $1\leq k< N$. Similar to $\eqref{tc2}$, for any $\varepsilon>0$, there exists $C_k>0$ such that
\[
\omega_k(t)\leq
U_{k}(x+c_{k}t-a,t)\leq\omega_k(t)+\frac{\varepsilon}{2}\quad {\rm for\ any }\ x>C_k,\ t\in\Bbb{R}
\]
and
\[
\omega_k(t)-\frac{\varepsilon}{2}\leq
U_{k+1}(x+c_{k+1}t-a,t)\leq\omega_k(t) \quad {\rm for\ any }\  x<-C_k,\ t\in\Bbb{R}.
\]
Similar to \eqref{tc6}, there exists   $n_{k}\in\mathbb{N}$ such that
\begin{equation*}
|\tilde{u}(x+c_{k}(t-g_{k}(t)),t;a)-U_{k}(x+c_{k}t-a,t)|\leq\frac{\varepsilon}{2}
\end{equation*}
and
\begin{equation*}
|\tilde{u}(x+c_{k+1}(t-g_{k+1}(t)),t;a)-U_{k+1}(x+c_{k+1}t-a,t)|\leq\frac{\varepsilon}{2}
\end{equation*}
 for any $x\in[-C_{k},C_{k}]$ and $t\geq n_{k}T$,
Thus, we have
\[
\|\tilde{u}(\cdot,t;a)-\omega_{k}(t)\|_{L^{\infty}(I_{k,c}(t))}\leq\varepsilon \quad
 {\rm for\ any}\  t\geq n_{k}T \,\,{\rm with}\ \,\, C:=C_k.
\]
This completes the proof.
$\square$

\section*{Acknowledgments}
 This work was supported by  NNSF of China (11371179, 11731005).

\end{document}